\theoremstyle{definition}
\newtheorem{thm}{Theorem}
\newtheorem*{thm*}{Theorem}
\newtheorem{lem}[thm]{Lemma}
\newtheorem{prop}[thm]{Proposition}
\newtheorem{dfn}[thm]{Definition}
\newtheorem{cor}[thm]{Corollary}
\newtheorem{exm}[thm]{Example}
\newtheorem{rmk}[thm]{Remark}
\newcommand{\T}{\mathbb{T}}
\newcommand{\R}{\mathbb{R}}
\newcommand{\Z}{\mathbb{Z}}
\newcommand{\p}{\mathbb{P}}
\newcommand{\Cok}{\operatorname{Cok}}
\begin{document}

\title{Tropical Theta Functions and Riemann-Roch 
Inequality for Tropical Abelian Surfaces}


\author{Ken Sumi}


\maketitle

\begin{abstract}
We show that the space of 
theta functions on tropical tori is identified with a convex polyhedron.
We also show a Riemann-Roch inequality for tropical abelian surfaces
by calculating the self-intersection numbers of
divisors.
\end{abstract}

\section{Introduction}


 %
Tropical geometry is a field of mathematics studying piecewise-linear objects
that appear as certain degenerate limits of algebraic varieties.
Several results in algebraic geometry have analogies
in tropical geometry.
One of the famous analogies is a Riemann-Roch theorem for compact tropical curves.

A Riemann-Roch theorem for finite graphs was 
discovered by Baker-Norine \cite{BN} in 2007 and extended to tropical curves
by Gathmann-Kerber \cite{GK} and Mikhalkin-Zharkov \cite{MZ} in 2008.
The tropical Riemann-Roch theorem states that,
for a compact tropical curve $C$ of genus $g$ and a tropical divisor $D$ on $C$,
we have
\[ r(D)-r(K-D)=\deg D -g+1 . \]
The number $r(D)$  is a substitute for the dimension of 
the complete linear system $|D|=\p (H^{0}(C,\mathcal{O}(D)))$,
called the rank of the divisor $D$.
In general, however,
$r(D)$ is different from the dimension of $|D|$ as a polyhedral complex. 

It is a very interesting problem to generalize the tropical Riemann-Roch 
theorem to higher dimensions. 
A main obstacle to higher dimensional generalization 
is to define the Euler characteristic 
$\chi (X,\mathcal{O}(D))$: 
firstly the higher cohomology $H^i(X,\mathcal{O}(D))$ with $i\ge 1$ 
is not defined in tropical geometry; 
secondly the dimension of the space $\Gamma (X,\mathcal{O}(D))$ 
of global sections of $\mathcal{O}(D)$ can be 
different from the number $r(D)+1$ appearing in the 
Riemann-Roch theorem.

Tropical versions of Noether's formula were shown by
Cartwright \cite{DC} for weak tropical surfaces
 and by Shaw \cite{S} for compact tropical surface in the sense of Mikhalkin.
They can be viewed as a Riemann-Roch theorem for the trivial divisor on a tropical surface. 
They avoid the above problems by considering the topological Euler characteristic of tropical surfaces.

Cartwright conjectured a Riemann-Roch inequality for weak tropical surfaces in \cite[Conjecture 3.6]{DC2}.
He introduced a higher dimensional analogue $h^{0}(X,D)$ of $r(D)+1$
and proposed a Riemann-Roch inequality by 
omitting $h^{1}(X, D)$ and assuming the Serre duality i.e. replacing 
$h^{2}(X, D)$ with $h^{0}(X, K-D)$,
where $K$ is the canonical divisor.

In this paper, we show the Cartwright conjecture for tropical abelian varieties, that is,
\begin{thm}[Corollary \ref{cor}]\label{THM2}
Let $X$ be a tropical abelian surface and let $D$ be a divisor on $X$.
Then the following inequality holds;
\[ h^{0}(X,D)+h^{0}(X,-D)\geq \frac{1}{2}D^{2}.\] 
\end{thm}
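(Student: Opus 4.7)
The plan is to use the identification of tropical theta functions with convex polyhedra to produce enough sections of $\mathcal{O}(D)$ or $\mathcal{O}(-D)$ to force Cartwright's $h^{0}$-invariant to be large, and then to compare this polyhedral count with the self-intersection number $D^{2}$.

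First I would attach to $D$ a symmetric bilinear form $Q_{D}$ on $\Lambda_{\R}=\R^{2}$, where $X=\R^{2}/\Lambda$, controlling the quasi-periodicity of any tropical theta function representing $D$. The main theorem of the paper then identifies the space of such theta functions with a convex polyhedron $P_{D}\subset\R^{2}$; the signature of $Q_{D}$ dictates whether $P_{D}$ or $P_{-D}$ is bounded. In parallel, using the intersection theory of divisors on a tropical abelian surface, one expects an identity of the form $D^{2}=2\det Q_{D}$ after fixing lattice normalizations, so that $\tfrac{1}{2}D^{2}$ becomes a concrete area comparable to a geometric invariant of $P_{D}$.

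The proof then splits into cases according to the signature of $Q_{D}$. If $Q_{D}$ is indefinite, identically zero, or rank one with $\det Q_{D}\le 0$, then $\tfrac{1}{2}D^{2}\le 0$ and the inequality is immediate because $h^{0}\ge 0$. If $Q_{D}$ is positive definite, then $P_{-D}$ reduces to at most a point while $P_{D}$ is a bounded lattice polygon of area $\det Q_{D}$; a Pick-type estimate shows that $P_{D}$ contains at least $\det Q_{D}=\tfrac{1}{2}D^{2}$ lattice points, yielding the bound on $h^{0}(X,D)$. The negative-definite case is obtained by exchanging $D$ and $-D$.

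The main obstacle will be translating the polyhedral lattice-point count into a lower bound for Cartwright's $h^{0}$, which is an analogue of the Baker--Norine rank $r(D)+1$ rather than of the raw dimension of the space of sections $\Gamma(X,\mathcal{O}(D))$. To pass from ``many sections'' to ``large rank'', one must show that, for every effective divisor $E$ of suitably bounded degree on $X$, some theta function in $P_{D}$ survives after the linear constraint of vanishing along $E$; translating $P_{D}$ by lattice vectors and using that its area dominates $\tfrac{1}{2}D^{2}$ should produce enough freedom in dimension two. A secondary technical point is the rank-one degenerate case, where $P_{D}$ is unbounded along a rational direction and one must confirm that nothing pathological happens, though here $\tfrac{1}{2}D^{2}=0$ so a crude bound suffices.
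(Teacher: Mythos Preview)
Your proposal rests on a misreading of the polyhedral description of $\Gamma(X,\mathcal{O}(D))$. The convex polyhedron produced by Theorem~\ref{THM} does \emph{not} sit inside $\R^{2}$; it is embedded in $\T^{B}$, where $B$ is a set of coset representatives for $(\Z^{2})^{*}/q(\Lambda)$ and $|B|=|\Cok(q)|$ can be arbitrarily large. The invariant that matters is the \emph{dimension} of this polyhedron, which equals $|B|$, not an area or a lattice-point count. No Pick-type estimate enters: when $Q$ is positive definite one has $|B|=|\Cok(q)|=\det Q$ as a bare fact about the index of $q(\Lambda)$ in $(\Z^{2})^{*}$, and independently Theorem~\ref{morethm} gives $\tfrac{1}{2}D^{2}=\det Q$. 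Your picture of a lattice polygon in the plane whose area governs $h^{0}$ looks borrowed from toric geometry and does not match either the classical or the tropical abelian setting.

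The step you flag as the ``main obstacle''---converting information about $\Gamma(X,\mathcal{O}(D))$ into a bound on Cartwright's $h^{0}(X,D)$---is precisely Theorem~\ref{rD}, which proves the exact equality $h^{0}(X,D)=\dim\Gamma(X,\mathcal{O}(D))=|B|$. The argument is combinatorial rather than area-based. For $h^{0}\ge l:=|B|$, given any $l-1$ points the paper exhibits a theta function through all of them as a tropical Vandermonde determinant in the generators $\Theta_{b}$. For $h^{0}\le l$, it picks $l$ generic points, builds a multigraph on vertex set $B$ whose edges record which pair of generators tie for the maximum at each point, notes that $l$ edges on $l$ vertices force a cycle, and shows the resulting identity among the $\Theta_{b}$ fails generically. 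Your proposed mechanism of translating $P_{D}$ and invoking that ``its area dominates $\tfrac{1}{2}D^{2}$'' does not supply either direction and would have to be replaced by something like this Vandermonde/cycle argument for the proof to go through.
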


A relation between $h^{0}(X,D)$ and the topology of the set of regular section
$\Gamma (X,\mathcal{O}(D))$
is also an interesting question.
We obtained the following results.

\begin{thm}[Theorem \ref{THM}]\label{THM1}
Let $X=\R^n/\Lambda$ be a tropical torus and 
let $L \to X$ be a tropical line bundle. 
If $\Gamma (X,L) \neq \{-\infty\}$, 
$\Gamma (X,L)$ is identified with a convex polyhedron and
$\p(\Gamma (X,L))$ is identified with a compact convex 
polyhedron; in particular it is pure-dimensional. 
\end{thm}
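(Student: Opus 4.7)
The plan is to parametrize sections by the coefficients of a canonical tropical-polynomial expansion and to read off the polyhedral structure from quasi-periodicity together with convexity. I first fix an automorphy-factor presentation of $L$: a cocycle $\{\phi_\lambda(x)=\langle\beta_\lambda,x\rangle+\gamma_\lambda\}_{\lambda\in\Lambda}$ of affine functions, with $\beta\colon\Lambda\to\Z^n$ a group homomorphism and image a sublattice $\beta(\Lambda)\subseteq\Z^n$. A section is then a convex piecewise-linear function $f\colon\R^n\to\R\cup\{-\infty\}$ satisfying $f(x+\lambda)=f(x)+\phi_\lambda(x)$ for all $\lambda$, and the tropical scalar action is the addition of constants.

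Next I expand $f$ in its canonical Legendre form $f(x)=\max_{a\in\Z^n}(\langle a,x\rangle+c_a)$ with $c_a\in\R\cup\{-\infty\}$. The quasi-periodicity translates into a recursion of the shape $c_{a+\beta_\lambda}=c_a+\gamma_\lambda-\langle a+\beta_\lambda,\lambda\rangle$; iterating along a direction $\lambda$ yields a quadratic profile in the iteration parameter $k$ whose leading term is $-\tfrac{1}{2}\langle\beta_\lambda,\lambda\rangle\,k^2$. The hypothesis $\Gamma(X,L)\neq\{-\infty\}$ together with compactness of $X$ forces $\langle\beta_\lambda,\lambda\rangle>0$ for every $\lambda\neq 0$, for otherwise the propagated coefficients would not decay and no finite maximum would exist. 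Consequently only finitely many $c_a$ per coset of $\beta(\Lambda)$ are ever active, and $f$ is determined by the finite tuple $(c_m)_{m\in M}$ for a fixed set $M$ of coset representatives of $\Z^n/\beta(\Lambda)$. This gives an embedding $\Gamma(X,L)\hookrightarrow(\R\cup\{-\infty\})^M$.

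To identify the image I impose that the tuple genuinely represents the canonical Legendre transform of a convex PL function: each point $(a,c_a)$ produced by the recursion must lie on the upper convex hull of the full collection $\{(a',c_{a'})\}_{a'\in\Z^n}$. The quadratic decay along each coset ensures that only finitely many such upper-hull inequalities are nontrivial, and each one is linear in the $c_m$. Thus $\Gamma(X,L)$ is cut out by finitely many linear inequalities in $\R^M$, hence is a convex polyhedron, with faces at infinity corresponding to tuples where some $c_m=-\infty$.

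The projectivization $\p(\Gamma(X,L))$ is the quotient by the tropical scalar action $c_m\mapsto c_m+t$. Normalizing $\max_m c_m=0$, the same upper-hull inequalities combined with the positive-definite quadratic profile bound each $c_m$ from below, so the quotient polyhedron is compact; pure-dimensionality then follows because in each normalization chart it is the closure of the open locus where all $c_m$ are finite. I expect the main obstacle to be the third step: reducing the a priori infinite family of upper-hull inequalities to finitely many linear constraints in the $c_m$, and verifying that the resulting list is both necessary and sufficient for a tuple to correspond to a genuine section. Once that polyhedral description is in hand, compactness and pure-dimensionality follow essentially formally from the positive-definite profile along cosets.
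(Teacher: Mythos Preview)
Your Legendre-transform parametrization is essentially the paper's approach in the positive definite case: the paper also passes to $\widehat{\Theta}$, observes the quasi-periodicity $\widehat{\Theta}(a+q(\lambda))=\widehat{\Theta}(a)+a\cdot\lambda+\tfrac12 Q_\R(\lambda,\lambda)-Q_\R(r,\lambda)$, and takes the values on a set $B$ of coset representatives of $(\Z^n)^*/q(\Lambda)$ as coordinates; Proposition~\ref{prop} shows the paper's embedding $\varphi^b(\Theta)=-\widehat{\Theta}(b)+r(b)$ differs from your $c_b$ only by a constant shift in each coordinate. For polyhedrality of the image the paper uses a slightly different device: rather than reducing the infinite family of upper-hull constraints on the Legendre data to a finite list, it characterizes the image by the inequalities $t_b\geq\min_{x\in D^b_0}\bigl\{\max_{b'\neq b}(\Theta_{b'}(x)+t_{b'})-b\cdot x+r(b)\bigr\}$ and invokes a lemma (Lemma~\ref{convexPL}) that the minimum over a compact polytope of a convex PL function with finitely many slopes is again convex PL with finitely many slopes. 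This sidesteps exactly the step you flag as the main obstacle.

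There is, however, a genuine gap in your treatment of the degenerate case. Your claim that $\Gamma(X,L)\neq\{-\infty\}$ forces $\langle\beta_\lambda,\lambda\rangle>0$ for all $\lambda\neq 0$ is false. When the symmetric form $Q$ (your $\lambda\mapsto\beta_\lambda$) is positive \emph{semi}definite with nontrivial kernel and the translation part $\alpha$ lies in $\image(q_\R)+(\Z^n)^*$, nonconstant theta functions do exist (they are constant along $\Ker(q_\R)$), yet $\langle\beta_\lambda,\lambda\rangle=Q(\lambda,\lambda)=0$ for every $\lambda\in\Ker(q)$. In that situation $\beta(\Lambda)\subset(\Z^n)^*$ has rank strictly less than $n$, so $(\Z^n)^*/\beta(\Lambda)$ has a free part and your set $M$ of coset representatives is infinite; the embedding into $(\R\cup\{-\infty\})^M$ no longer lands in a finite-dimensional space, and your finiteness and compactness arguments collapse. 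The paper fixes this by first showing that theta functions descend to $\R^n/\Ker(q_\R)$, where the induced form is positive definite and the relevant coset group $(\image(q_\R)\cap(\Z^n)^*)/q(\Lambda)$, namely the torsion part of $\Cok(q)$, is finite, and then applying the positive-definite argument on that quotient.
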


\begin{thm}[Theorem \ref{rD}]\label{introthm1}
Let $X$ be a tropical torus and let $D$ be a divisor on $X$.
Then $h^{0}(X, D)$ defined by Cartwright \cite{DC2} coincides with
the dimension of $\Gamma (X,\mathcal{O}(D))$
as a convex polyhedron.
\end{thm}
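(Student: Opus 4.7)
The plan is to match Cartwright's combinatorial invariant $h^{0}(X,D)$ with the polyhedral dimension of $\Gamma(X,\mathcal{O}(D))$ supplied by Theorem \ref{THM1}. Recall that Cartwright's $h^{0}$ is defined inductively through point-imposing conditions: roughly, $h^{0}(X,D) > k$ iff for every point $p \in X$ one can find a section whose associated divisor dominates $p$ in a way that keeps $h^{0}$ of the residual divisor at least $k$. So the strategy is to prove two inequalities, $h^{0}(X,D) \le \dim\Gamma(X,\mathcal{O}(D))$ and $h^{0}(X,D) \ge \dim\Gamma(X,\mathcal{O}(D))$, using the explicit polyhedral model of the space of sections.

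For the upper bound, the crucial observation is that imposing a single point-vanishing condition on a tropical section cuts the coefficient polyhedron by codimension at most one. Indeed, vanishing at a point $p$ means that the tropical polynomial achieves its minimum at $p$ at least twice, which on the polyhedron $P \subset \Gamma(X,\mathcal{O}(D))$ of coefficients amounts to requiring two specific affine-linear forms to coincide (while remaining minimal among the finitely many that contribute at $p$). This is a single affine equation intersected with $P$, hence cuts at most one dimension. Iterating, one cannot impose more than $\dim P$ independent point conditions before emptying the space, which gives $h^{0}(X,D) \le \dim P$.

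For the lower bound, I would exploit the translation-invariance of the tropical torus $X = \R^{n}/\Lambda$. As the point $p$ varies over $X$, the codimension-at-most-one cut traced out on $P$ varies through a family of affine hyperplanes parametrised by $X$. The goal is to show that for generic $p$, the cut is \emph{exactly} codimension one and not contained in the boundary of $P$ in a degenerate way; once this is established, one produces a strictly descending chain of faces $P \supsetneq P_{1} \supsetneq \cdots \supsetneq P_{\dim P}$, each obtained by a point-vanishing condition, which witnesses $h^{0}(X,D) \ge \dim P$ by unwinding Cartwright's inductive definition. The residual divisors that arise at each step are again divisors on the same torus (translation only shifts the line bundle), so the induction closes on itself.

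The main obstacle is this genericity step in the lower bound: one must verify that as $p$ ranges over $X$, the family of hyperplane cuts on $P$ is rich enough to realise arbitrary codimension-one reductions, and in particular that generic $p$ does not land on a locus where the minimum of the tropical section is attained three or more times (which would collapse several cuts to one). Here the torus structure is essential, since the universal cover $\R^{n}$ gives affine coordinates in which moving $p$ perturbs the defining linear forms in a controlled way; combined with the piecewise-linear chamber decomposition of $X$ induced by the theta function, this should reduce the question to a transversality check within each top-dimensional chamber, which is automatic for generic $p$.
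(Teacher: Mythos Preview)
There are two genuine problems. First, the definition you work with is not Cartwright's as stated in the paper: you describe an inductive definition via ``residual divisors'' (``keeps $h^{0}$ of the residual divisor at least $k$''), which is the Baker--Norine picture on curves. In the paper, $h^{0}(X,D)$ is simply the least $k$ for which there exist points $p_{1},\ldots,p_{k}\in X$ such that no $E\in|D|$ passes through all of them. There is no notion of $D-p$ for a point $p$ on a surface, so your phrases ``residual divisors that arise at each step'' and ``the induction closes on itself'' have no content here. Second, your upper-bound argument is logically inverted. If each point-vanishing condition cuts the polyhedron $P$ in codimension \emph{at most} one, then after $\dim P$ cuts the remaining locus still has dimension $\geq 0$ and is non-empty; this would (at best) yield $h^{0}\geq\dim P$, not the stated $h^{0}\leq\dim P$. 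Moreover, the vanishing locus at $p$ is a tropical hyperplane in the coefficients $(s_{b})$, not a single affine equation, so even a corrected codimension count needs a separate argument.

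The paper's proof proceeds quite differently. For $h^{0}\geq l$ (with $l=|B|=\dim\Gamma(X,\mathcal{O}(D))$), it exhibits for any $l-1$ points an explicit section vanishing at all of them, namely the tropical Vandermonde determinant $\max_{\sigma\in\mathfrak{S}_{l}}\bigl\{\Theta_{b_{\sigma(1)}}(x)+\sum_{i\geq 2}\Theta_{b_{\sigma(i)}}(q_{i-1})\bigr\}$. For $h^{0}\leq l$, it argues by contradiction: if some section $\max_{b}\{\Theta_{b}+t_{b}\}$ vanishes at $l$ points $q_{1},\ldots,q_{l}$ lying outside $\bigcup_{b}\operatorname{supp}(\operatorname{div}\Theta_{b})$, then each $q_{k}$ forces an equality between two of the terms $\Theta_{b}(q_{k})+t_{b}$, recorded as an edge on the vertex set $B$. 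With $l$ edges on $l$ vertices there must be a cycle; summing the equalities around it eliminates the $t_{b}$'s and leaves a nontrivial rational relation among the $q_{k}$'s, which generic points avoid. Your genericity intuition is thus relevant, but it enters the \emph{upper} bound and via this combinatorial cycle mechanism rather than a descending chain of faces.
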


 If a tropical analogue of Hirzebruch-Riemann-Roch theorem holds for
 $n$ dimensional tropical abelian varieties,
 then $\frac{1}{n!}D^{n}$ should be integral. 
 In the last section, we show this.
 \begin{prop}[Theorem \ref{morethm}]
 For a divisor $D$ on a tropical abelian variety, $\frac{1}{n!}D^{n}$ is an integer.
 \end{prop}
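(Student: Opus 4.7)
The plan is to express $\tfrac{1}{n!}D^n$ as the determinant of an integer matrix canonically attached to $D$; integrality then comes for free.

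First, I would recall from the theory of tropical theta functions developed earlier in the paper that a divisor $D$ on $X=\R^n/\Lambda$ is the corner locus of a tropical theta function on $\R^n$, and that such a theta function is determined modulo linear and constant shifts by an integer-valued symmetric bilinear form $Q_D\colon\Lambda\times\Lambda\to\Z$ governing the jumps of its gradient across $\Lambda$-translates. This $Q_D$ is the tropical analogue of the first Chern class of the line bundle associated with $D$, and its integrality is built into the definition of the lattice of theta functions.

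The key step is to identify $D^n$ with $n!\det(Q_D)$, where the determinant is taken with respect to any $\Z$-basis of $\Lambda$. I would consider $n$ divisors $D_1,\dots,D_n$ with associated forms $Q_1,\dots,Q_n$ and compute $D_1\cdots D_n$ by replacing each $D_i$ with a generic translate and counting transverse tropical intersections of their corner loci. Translation invariance and multilinearity of the intersection pairing on an abelian variety force the answer to be a symmetric multilinear function of the $Q_i$; analysis of the local model at a transverse intersection (building on the surface-case computation used in Theorem \ref{THM2}) should identify this function as the classical mixed discriminant, yielding $D_1\cdots D_n=n!\,\mathrm{md}(Q_1,\dots,Q_n)$. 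Specializing to $D_i=D$ and using the identity $\mathrm{md}(Q,\dots,Q)=\det(Q)$ gives $D^n=n!\det(Q_D)\in n!\Z$, which is what is claimed.

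The main obstacle is the local intersection-multiplicity computation, i.e.\ rigorously reading off the mixed-discriminant formula from the transverse stable intersections of $n$ generic tropical theta divisors on an abelian variety. The surface case exhibits the correct pattern, but the combinatorics of stable intersection of $n$ tropical hypersurfaces on $\R^n/\Lambda$, together with the identification of the resulting symmetric multilinear invariant of the forms $Q_i$ with the mixed discriminant, requires a careful direct argument or an induction on $n$. Ensuring that generic translation produces only transverse intersections and that the multiplicities at each intersection point faithfully encode the quadratic forms $Q_i$ (with no unexpected correction) is the delicate technical content.
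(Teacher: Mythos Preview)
Your target identity $\tfrac{1}{n!}D^n=\det Q_D$ is exactly what the paper proves, so the overall strategy is sound. However, the route you propose and the route the paper takes are genuinely different, and the obstacle you yourself flag is real and is precisely what the paper's argument is designed to sidestep.

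The paper never attempts a direct local reading of a mixed discriminant from transverse tropical intersections. Instead it proceeds by reduction:
(i) an explicit hand computation shows $D^n=n!$ for the principal polarization $L(E_n,0)$ on $\R^n/\Z^n$ (the theta divisor is a union of coordinate hyperplanes, and generic small translates meet in $n!$ points indexed by permutations);
(ii) a deformation argument: for any positive definite \emph{principal} $Q$, one builds a family $\mathscr{X}$ of tropical tori over an interval interpolating between $E_n$ and $Q$, together with a universal theta divisor on the total space, and uses that the intersection number with a fibre is constant, hence equals the value $n!$ at the $E_n$ end;
(iii) for $Q$ positive definite but not principal, a covering argument via the overlattice $\widetilde\Lambda=q_{\R}^{-1}((\Z^n)^*)$ multiplies the answer by $|\widetilde\Lambda/\Lambda|=\det Q$;
(iv) finally, $\tfrac{1}{n!}D^n$ and $\det Q$ are both degree-$n$ polynomials on $\operatorname{Im}c_1\times(\R^n)^*$ agreeing on the open cone of positive definite forms, hence agree identically.

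Thus the only explicit intersection count in the paper is the trivial coordinate-hyperplane case; the general local-model computation you describe as the ``main obstacle'' is simply never needed. There is also no separate surface-case computation on which to build: the $n=2$ Riemann-Roch inequality (your Theorem~\ref{THM2}) is a \emph{corollary} of the $n$-dimensional identity, not an input to it, so that reference is circular.

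Two further points on your sketch. First, $Q_D$ is not a form $\Lambda\times\Lambda\to\Z$ but an element of $\Lambda^*\otimes(\Z^n)^*$; the determinant must be taken with respect to bases of the two distinct lattices $\Lambda$ and $\Z^n$ (with compatible orientation), not with respect to a single basis of $\Lambda$. Second, your assertion that multilinearity and translation invariance force the intersection pairing to depend only on the $Q_i$ (and not on the $\alpha_i$) already requires argument: a priori the pairing is multilinear on $\operatorname{Pic}(X)$, and one must show it factors through $c_1$. In the paper this falls out of the polynomial-extension step (iv), but in your direct approach it is part of the unfinished local analysis.
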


After the first version of the paper was put on the arXiv,
Dustin Cartwright kindly suggested to the author that
statements as in Theorem \ref{introthm1} and Theorem \ref{morethm} may hold.
We would like to thank him for his valuable suggestions.

\section{Tropical modules}
The tropical semifield $\T$ is the set $\R\cup \{ -\infty \}$ equipped with
the tropical sum $``x+y"=\max \{ x,y \}$ and
the tropical product $``xy"=x+y$ for $x,y\in \T$.
The set of invertible elements of $\T$ is $\R$.
A commutative semigroup $V$  
equipped with the unit $\{ -\infty\}$ and a scalar product $\T\times V \rightarrow V$ 
is called a {\it tropical module} or a $\T${\it -module}
if the following conditions are satisfied:
\begin{itemize}
\item $``(x+y)v"=``xv+yv"$ for any $x,y\in \T$, any $v\in V$;
\item $``x(v+w)"=``xv+xw"$ for any $x \in \mathbb{T}$, any $v,w\in V$;
\item $``x(yv)"=``(xy)v"$ for any $x,y\in \mathbb{T}$, any $v\in V$;
\item $``0\cdot v"=v$ for the multiplicative unit $0\in \T$ and any $v\in V$;
\item if $``xv"=``yv"$ for some $x,y \in \T$ and $v\in \T$,
then $x=y$ or $v=-\infty$.
\end{itemize}

By these conditions, we have $``-\infty\cdot v"=-\infty$.
For example, $\T ^{n}$ is naturally a $\T$-module.
Let $e_{i}\in \T^{n}$ be the element 
with 0 in the $i$th coordinate and $-\infty$ in the other coordinates. 

Let $V$, $W$ be tropical modules.
A map $f\colon V\to W$ is called a {\it tropical linear morphism} if
$f$ satisfies $f(``v_{1}+v_{2}")=``f(v_{1})+f(v_{2})"$ and $f(``tv")=``tf(v)"$ for any $v,v_{1},v_{2}\in V$, $t\in \T$.
If there is the inverse $f^{-1}$ and this is also a tropical linear morphism,
then we call $f$ an isomorphism.

Let $V$ be a tropical module.
Its projectivization $\p (V)$ is the quotient of $V\setminus \{ -\infty\}$
by the equivalence relation $\sim$
where $v\sim v' $ for $v,v'\in V$ if there exists $t\in \R$ such that $v'=``tv"$.

\begin{exm}\label{Vn}
Let $V_{n}$ be the tropical submodule of $\T^{n}$ generated by 
\[ f_{1}:=(-\infty, 0,\ldots ,0),f_{2}:=(0, -\infty, 0,\ldots ,0),
\ldots ,f_{n}:=(0,\ldots ,0,-\infty ).\]
For any linear combination
$a_{1}f_{1}+a_{2}f_{2}+\cdots +a_{n}f_{n}$ with
$a_{i}\geq a_{j}\geq \text{(the others)}$,
$a_{1}f_{1}+a_{2}f_{2}+\cdots +a_{n}f_{n}$ is the element
whose $i$th coordinate is  $a_{j}$ and the other coordinates are $a_{i}$.
For example, if $a_{1}\geq a_{2}\geq a_{k}$, $\ k=3,\ldots ,n$, 
we get $a_{1}f_{1}+a_{2}f_{2}+\cdots +a_{n}f_{n}=(a_{2},a_{1},\ldots ,a_{1})$.

The projectivization $\p (V_{n})\subset \p (\T^{n})$ is 
the finite graph with $n+1$ vertices
$[0,\ldots ,0],[ -\infty ,0,\ldots ,0] ,\ldots ,[0,\ldots ,0,-\infty ]$
such that the valency of the central vertex $[0,\ldots ,0]$ is $n$ and
the valency of the other vertex is $1$.
\end{exm}

\begin{exm}
A function $f:\R ^{n}\rightarrow \T$ is a {\it tropical polynomial} (resp. {\it tropical Laurent polynomial})
if $f$ is a constant map to $\{ -\infty\}$ or is
of the form 
$f(x)=\displaystyle\max_{j\in S}\{ a_{j}+j\cdot x \}$,
where $S$ is a finite subset of $(\Z_{\geq 0})^{n}$ (resp. $\Z^{n}$) and 
$a_{j}\in \R$. 
A tropical polynomial naturally extends to a function from $\T^{n}$ to $\T$.
The set of tropical polynomials on $\R^{n}$ is naturally a tropical module.
\end{exm}
Clearly, a tropical Laurent monomial is a {\it $\Z$-affine linear function}, that is, 
an affine linear function on $\R^{n}$ whose slope is in $(\Z ^{n})^{*}$. 

\section{Tropical Tori}
The goal of this section is to show the Theorem \ref{THM1}.

\subsection{Tropical manifold and line bundles}
In this section we briefly introduce some basic notions of tropical geometry
and give some examples.
References are made to Allermann-Rau \cite{AR}, Mikhalkin \cite{M} and
Mikhalkin-Zharkov \cite{MZ}, \cite{MZ2}.

\begin{dfn}
Let $U, V$ be open subsets of $\R^{n}, \R^{m}$ respectively.
A map $f:U\to V$ is {\it $\Z$-affine linear} 
if $f$ is given by a restriction of an affine map whose linear part is represented by an integral matrix.
\end{dfn}

\begin{dfn}\cite[Definition 1.22]{MG}
A {\it tropical } (or {\it integral affine}) {\it manifold} $M$ is a topological manifold
with an atlas $\{ (U_{i},\psi_{i}) \}$
such that $\psi_{i}\circ \psi_{j}^{-1}$ is a restriction of
a $\Z$-affine linear map on $\R^{n}$ for each $U_{i}\cap U_{j}\neq \emptyset$,
where $n=\operatorname{dim} M$.

A map $f: M_{1}\to M_{2}$ between tropical manifolds is {\it tropical}
if it is locally written as a $\Z$-affine map.
\end{dfn}

\begin{rmk}
In the literature, tropical manifolds usually mean more general spaces,
see Mikhalkin-Zharkov \cite[Definition 1.14]{MZ2}.
We adopt the above definition (following \cite[Definition 1.22]{MG})
 since we are only interested in tropical tori in this paper.
\end{rmk}

\begin{exm}
The Euclidean space $\R^{n}$ is a tropical manifold.
\end{exm}

\begin{exm}  
Fix the standard lattice $\Z^{n}\subset \R^{n}$.
Let $\Lambda\subset \R^{n}$ be a lattice, that is,
a discrete additive abelian subgroup of $\R^{n}$ of rank $n$.
The quotient $\R ^{n}/\Lambda $ is called a {\it tropical torus}.
This is a tropical manifold whose integral structure is given by the standard lattice $\Z^{n}$.
\end{exm}

\begin{dfn} 
Let $U$ be an open subset of a tropical manifold $M$.
A continuous function $f:U\to \T$ is said to be {\it regular} if 
$f$ is locally the restriction of a tropical Laurent polynomial on $\R^{n}$.

A continuous function $h:U\to \T$ is said to be {\it rational} if 
for all $x\in U$ there exists an open neighborhood $V\subset U$ of $x$ and
 two regular functions $f,\ g$ on $V$
such that $h|_{V}=``\frac{f}{g}"=f-g$.

The {\it structure sheaf} $\mathcal{O}_{U}$ of $U$ is the sheaf of regular functions on $U$.
The sheaf $\mathcal{O}_{U}^{*}$ is the subsheaf of $\mathcal{O}_{U}$ consisting of
invertible regular functions, that is, $\Z$-affine linear functions.
We denote $\mathcal{O}_{}^{*}$ by ${\rm Aff}_{\Z}(X)$.
This sheaf is locally constant.
\end{dfn}

\begin{rmk}  
Since the structure sheaf $\mathcal{O}_{X}$ is NOT an abelian sheaf,
we cannot consider the $i$th \v{C}ech cohomology group
$H^{i}(X,\mathcal{O}_{X})$ for $i>0$.
But since $\mathcal{O}_{X}^{*}$ is a multiplicative abelian sheaf,
we can consider $H^{1}(X,\mathcal{O}_{X}^{*})$ as in the classical case.
\end{rmk}

\begin{dfn}\label{linebundle}\cite[Definitions 1.5]{LA}  
Let $M $ be an $n$-dimensional tropical manifold.
A {\it tropical line bundle} on $M$ is a tuple $(L, \pi ,\{ U_{i},\Psi_{i} \}_{i})$ of a topological space
$L$, a continuous surjection $\pi : L \twoheadrightarrow M$,
an open covering $\{ U_{i} \}$ called the trivializing covering
and homeomorphisms $\Psi_{i}:\pi^{-1}(U_{i})\cong U_{i}\times \T$ 
called trivializations which satisfy:
\begin{itemize}
\item  
The following diagram is commute:
\[\xymatrix{ \pi^{-1}(U_{i}) \ar[r]^{\Psi_{i}} \ar[rd]_{\pi} & U_{i}\times \T 
\ar[d] 
 \\ & U_{i} } .\]
Here the vertical map is the first projection;
\item For every $i,j$ with $U_{i}\cap U_{j}\neq \emptyset$,
there exists a $\Z$-affine linear function  $\varphi_{ij} \colon U_{i}\cap U_{j}\to \R$ 
such that
$\Psi_{j}\circ\Psi_{i}^{-1}\colon (U_{i}\cap U_{j})\times \T\to (U_{i}\cap U_{j})\times \T$
is given by $(x,t)\mapsto (x,``\varphi_{ij}(x) t")$.
These $\varphi_{ij}$ are called a {\it transition functions}  of $L$.
\end{itemize}
It is clear that 
transition functions $\varphi _{ij}$ satisfies the cocycle condition $\varphi_{ij}\circ \varphi_{jk}=\varphi_{ik}$.
We identify two tropical line bundles $(L, \pi ,\{ U_{i},\Psi_{i} \}_{i\in I})$ and $(L, \pi ,\{ U_{j},\Psi_{j} \}_{j\in J})$
if $(L,\pi ,\{ U_{i},\Psi_{i} \}_{i\in I\sqcup J})$ is a tropical line bundle on $M$, that is, 
satisfies the second condition above.
We often write $L$ for $(L, \pi ,\{ U_{i},\Psi_{i} \}_{i})$ to avoid heavy notation.
\end{dfn}

We can refine the trivializing covering for $L$ as follows. 
If a covering $\{ V_{j} \}$ is a refinement  of the trivializing covering $\{ U_{i}\}$ for $L$,
then the trivializations $\Psi_{i}$ over $U_{i}$ induce trivializations $\Phi_{j}$ over $V_{j}$
and the line bundles $(L, \pi , \{ U_{i}, \Psi_{i} \})$ and $(L, \pi , \{ V_{i}, \Phi_{i} \})$ are identified.

Given an open covering $\{ U_{i} \}$ of $M$ and transition functions $\varphi_{ij} \colon U_{i}\cap U_{j}\to \R$
satisfying the cocycle condition,
we can construct a line bundle $L$ by
$L=\coprod_{i}U_{i}\times \T / \sim$.
Here 
$(x_{i},t_{i})\sim (x_{j},t_{j})$ for
$(x_{i},t_{i})\in U_{i}\times \T$ and $(x_{j},t_{j})\in U_{j}\times \T$
means $x_{i}=x_{j}\in U_{i}\cap U_{j}$ and $t_{j}=``\varphi_{ij}(x) t_{i}"$.

\begin{dfn}\cite[Lemma 1.16]{LA} 
Let $(L_{1},\pi _{1}, \{ U_{1i},\Psi_{1i} \})$ , $(L_{2},\pi _{2}, \{ U_{2j},\Psi_{2j} \})$ be two tropical line bundles
on a tropical manifold $M$.
By taking a common refinement of the trivializing coverings $\{ U_{1i}\} ,\{ U_{2i} \}$ if necessary,
we may assume that $U_{1i}=U_{2i}=:U_{i}$.
Then $L_{1}$ and $L_{2}$ are said to be {\it isomorphic}, denoted by $L_{1}\cong L_{2}$,
if  there exist a map $f\colon L_{1}\to L_{2}$ and invertible regular functions
$h_{ij}\colon U_{i}\cap U_{j}\to \T$ such that
$\Psi_{2j}\circ f\circ\Psi_{1i}^{-1}\colon (U_{i}\cap U_{j})\times \T\to (U_{i}\cap U_{j})\times \T$
is of the form $(x,t)\mapsto ( x, ``h_{ij}(x)t")$,

We can see that $L_{1}$ and $L_{2}$ are isomorphic
if and only if 
there exist 
an invertible regular function $g_{i}\colon U_{i}\to \R$ for each $i$
such that $``g_{i}(x)\varphi^{1}_{ij}(x)g_{j}(x)^{-1}"=\varphi^{2}_{ij}(x)$
for each $U_{i}\cap U_{j}\neq \emptyset$.
Here $\varphi^{1}_{ij}, \varphi^{2}_{ij}$ are the transition function of $L_{1}, L_{2}$ respectively.
\end{dfn}

\begin{dfn} 
We take two line bundles $L_{1}$ and $L_{2}$ as above.
The tensor product $L_{1}\otimes L_{2}$ is defined to be
the tropical line bundle on $M$ whose transition functions are 
$``\varphi^{1}_{ij}\varphi^{2}_{ij}"=\varphi^{1}_{ij}+\varphi^{2}_{ij}$.

The inverse $L_{1}^{-1}$ of the line bundle $L_{1}$ is defined to be the tropical line bundle on $M$
whose transition functions are 
$``(\varphi^{1}_{ij})^{-1}" =-\varphi^{1}_{ij}$.
\end{dfn}

\begin{dfn} 
Let $\pi\colon L\to M$ be a line bundle on a tropical manifold $M$ and 
$U$ be an open subset of $M$.
A function $s\colon U\to \pi^{-1}(U)$ is a {\it regular section} (resp. {\it rational section}) on $U$ of $L$
if $\pi\circ s$ is the identity map on $U$ and
$p_{i}\circ \Psi_{i}\circ s$ is a regular (resp. rational) function,
where $p_{i}\colon U_{i}\times \T \to \T$ is the second projection.
If $U=M$ then we call $s$ a global regular (resp. rational) section 
or simply a regular (resp. rational) section.
The set $\Gamma (U,L)$ of regular sections on $U$ of $L$ 
naturally has the structure of a tropical module.
\end{dfn}


The tropical Picard group ${\rm Pic}(M)$ is the set of 
isomorphic classes of tropical line bundles on $M$.
The product and inverse are given by the  tensor product and the inverse of line bundles, respectively.
It is clear that ${\rm Pic}(M)$ is naturally identified with 
$H^{1}(M,\mathcal{O}_{M}^{*})=H^{1}(M, {\rm Aff}_{\Z})$ via transition functions.


We can also define the pull-back of line bundles as in the classical case.
Let $f:M\to M'$ be a tropical map between tropical manifolds
and let $\pi: L\to M'$ be a tropical line bundle with the trivializing covering $\{  U_{i}\}$ 
and the transition functions $\varphi_{ij}$.
Then we define the pull-back as
\[ f^{*}L:=\{ (x ,\xi )\in M\times L | f(x)=\pi (\xi)  \}\] 
with the trivializing covering is $\{ f^{-1}(U_{i}) \}$ and the transition functions are $\varphi_{ij}\circ f$.

\subsection{Divisors on tropical tori}
In this section, we briefly introduce the notion of divisors.
For the details of divisors, we refer the reader to Allermann-Rau \cite{AR} or Mikhalkin \cite{M}.

\begin{dfn}
A {\it  rational polyhedron in $\R^{n}$} is a finite intersection of 
closed half spaces with rational slopes, that is, a subset of the form
$\bigcap_{i\in I}\{  x\in \R^{n} | a_{i}\cdot x\geq b_{i} \}$ 
where $I$ is a finite set and  $a_{i}\in \Z^{n}, b_{i}\in \R$.
A {\it face of a rational polyhedron $\sigma$} is a subset of the form 
$\sigma \cap \{ x\in \R^{n} | a\cdot x=b \}$
with $a\in \Z^{n}$ and $b\in \R$  satisfying $\sigma\subset \{ x\in \R^{n} | a\cdot x\geq b \}$

Let $M$ be a tropical manifold with an atlas $\{ (U_{i},\psi_{i}) \}$.
A {\it rational polyhedron in a tropical manifold $M$} is
a subset $\sigma\subset M$ such that
$\psi_{i} (\sigma \cap U_{i})$ is an intersection of $\operatorname{Im}\psi_{i}$
and a rational polyhedron $\rho_{i}$ in $\R^{n}$ for each $i$.
A {\it face of $\sigma$} is a subset $\tau\subset \sigma$ such that
$\psi_{i} (\tau \cap U_{i})$ is an intersection of $\operatorname{Im}\psi_{i}$
and a face of $\rho_{i}$ in $\R^{n}$ for each $i$.
\end{dfn}

\begin{dfn}

A {\it rational polyhedral complex} on a tropical manifold $M$ is 
 a locally finite collection $\Sigma$ of 
rational polyhedra in $M$ satisfying the following:
\begin{itemize}
\item Any face of $\sigma \in \Sigma$ belongs to $\Sigma$;
\item For $\sigma ,\tau\in \Sigma$ with $\sigma\cap \tau\neq \emptyset$,
the intersection is a face of both $\sigma$ and $\tau$. 
\end{itemize}
Polyhedra belonging to $\Sigma$ are called cells.
A rational polyhedral complex is said to be {\it pure $k$ dimensional}
if its maximal cells (with respect to inclusions) have dimension $k$.
\end{dfn}

\begin{dfn}
Let $M$ be an $n$ dimensional tropical manifold.
A {\it weight} of a pure $n-1$ dimensional polyhedral complex $\Sigma$ on $M$ is
a function $w$ from the set of $n-1$ cells of $\Sigma$ to $\Z$.
Then we call the pair
$(\Sigma, w)$ a {\it weighted rational polyhedral complex}.
\end{dfn}

\begin{dfn}\label{bal}
Let $(\Sigma, w)$ be a pure $n-1$ dimensional weighted polyhedral complex on 
an open subset of $\R^{n}$.
Fix an $n-2$ dimensional cell $P$ of $\Sigma$ and
let $F_{1},\ldots ,F_{k}$ be the $n-1$ dimensional cells adjacent to $P$.
We define $L(P)\subset \R^{n}$ to be the tangent space of $P$. 
Let $v_{1},\ldots ,v_{k}\in \Z^{n}/L(P)\cap \Z^{n}$ be the primitive outgoing vectors from the origin 
parallel to $F_{1}/L(P),\ldots ,F_{k}/L(P)$ respectively.
Then $(\Sigma ,w)$ is said to be {\it balanced at} $P$ if 
$\sum_{i=1,\ldots ,k}w(F_{i})v_{i}=0$ and
 $(\Sigma ,w)$ is said to be {\it balanced} if it is balanced at all $n-2$ dimensional faces $P$ of $\Sigma$.

Let $M$ be a tropical manifold with an atlas $\{ (U_{i},\psi_{i}) \}$ and
let $(\Sigma ', w')$ be a pure $n-1$ dimensional weighted polyhedral complex on $M$.
$(\Sigma ', w')$ is said to be {\it balanced} if its restriction to $U_{i}$ is identified with a balanced
weighted polyhedral complex via $\psi_{i}$.
\end{dfn}

\begin{exm}\label{exmdiv}
Let $f\colon \R^{n}\to \R$ be a piecewise-linear function whose slopes lie in $(\Z^{n})^{*}$
and $D(f)$ be the locus where $f$ is nonlinear on any neighborhood.
Then $D(f)$ is a (non-compact) polyhedral complex whose maximal cells have integral slopes.

For each maximal cell of $D(f)$,
we can define the multiplicity given by $f$ as follows.
For each maximal cell $F$ of $D(f)$, 
there exist two connected components $V_{1},V_{2}$ of $\R^{n}-D(f)$ on which
the restriction $f|_{V_{i}}$ is affine linear and 
the intersection of the closures of $V_{1}$ and $V_{2}$ is $F$.
Then the difference of the slopes of $f|_{V_{1}},f|_{V_{2}}$ is in $(\Z^{n})^{*}$.
Thus we can take the lattice length $l_{F}$ of this difference, that is, 
the maximal positive integer $k$ such that this difference can be divided by $k$.
We define the multiplicity of $F$ as the lattice length $l_{F}$ if $f$ is convex around $F$,
as $-l_{F}$ if $f$ is concave around $F$.
Then $D(f)$ is not only a weighted integral polyhedral complex, but also 
a balanced one.

We draw some examples of balanced weighted graph on $\R^{2}$ given by
$f_{1}=\max\{ x,y,0\} $,
$f_{2}=\max \{ 2x,y,x+y,0\}$
and
$f_{3}=\max \{ 3x,3y,2x+y,0 \}$
in Figure \ref{exm1}, \ref{exm2} and \ref{exm3}.
\begin{figure}[h]
\begin{minipage}{0.3\textwidth}
\center
\unitlength 0.1in
\begin{picture}(  8.2000,  8.0000)(  1.8000,-10.0000)
%
{\color[named]{Black}{%
\special{pn 8}%
\special{pa 1000 200}%
\special{pa 600 600}%
\special{fp}%
\special{pa 600 600}%
\special{pa 600 1000}%
\special{fp}%
\special{pa 600 600}%
\special{pa 200 600}%
\special{fp}%
}}%
\put(4.0000,-4.0000){\makebox(0,0){$y$}}%
\put(8.0000,-8.0000){\makebox(0,0){$x$}}%
\put(4.0000,-8.0000){\makebox(0,0){$0$}}%
\put(4.0000,-6.0000){\makebox(0,0){$\bf{1}$}}%
\put(6.0000,-8.0000){\makebox(0,0){$\bf{1}$}}%
\put(8.0000,-4.0000){\makebox(0,0){$\bf{1}$}}%
\end{picture}%
\caption{$D(f_{1}(x,y))$}
\label{exm1}
\end{minipage}
\begin{minipage}{0.3\textwidth}
\center
\unitlength 0.1in
\begin{picture}( 14.2000, 16.0000)(  1.8000,-18.0000)
\put(4.0000,-10.0000){\makebox(0,0){$y+1$}}%
\put(11.0000,-14.0000){\makebox(0,0){$2x$}}%
\put(4.0000,-16.0000){\makebox(0,0){$0$}}%
\put(11.0000,-4.0000){\makebox(0,0){$x+y-1$}}%
\put(8.0000,-6.0000){\makebox(0,0){$\bf{1}$}}%
\put(12.0000,-6.0000){\makebox(0,0){$\bf{1}$}}%
\put(7.0000,-12.0000){\makebox(0,0){$\bf{1}$}}%
\put(4.0000,-14.0000){\makebox(0,0){$\bf{1}$}}%
\put(6.0000,-16.0000){\makebox(0,0){$\bf{1}$}}%
%
{\color[named]{Black}{%
\special{pn 8}%
\special{pa 200 1400}%
\special{pa 600 1400}%
\special{fp}%
\special{pa 600 1400}%
\special{pa 600 1800}%
\special{fp}%
\special{pa 600 1400}%
\special{pa 800 1000}%
\special{fp}%
\special{pa 800 1000}%
\special{pa 800 200}%
\special{fp}%
\special{pa 800 1000}%
\special{pa 1600 200}%
\special{fp}%
}}%
\end{picture}%
\caption{$D(f_{2}(x,y))$}
\label{exm2}
\end{minipage}
\begin{minipage}{0.3\textwidth}
\center
\unitlength 0.1in
\begin{picture}( 19.2000, 10.0000)(  2.8000,-12.0000)
%
{\color[named]{Black}{%
\special{pn 8}%
\special{pa 400 600}%
\special{pa 600 600}%
\special{fp}%
\special{pa 600 600}%
\special{pa 600 1200}%
\special{fp}%
\special{pa 600 600}%
\special{pa 1200 400}%
\special{fp}%
\special{pa 1200 400}%
\special{pa 1800 1000}%
\special{fp}%
\special{pa 1800 1000}%
\special{pa 1800 1200}%
\special{fp}%
\special{pa 1800 1000}%
\special{pa 2200 600}%
\special{fp}%
\special{pa 1200 400}%
\special{pa 1400 200}%
\special{fp}%
}}%
\put(12.0000,-10.0000){\makebox(0,0){$x+3$}}%
\put(20.0000,-8.0000){\makebox(0,0){$\bf{1}$}}%
\put(15.0000,-7.0000){\makebox(0,0){$\bf{1}$}}%
\put(9.0000,-5.0000){\makebox(0,0){$\bf{1}$}}%
\put(6.0000,-9.0000){\makebox(0,0){$\bf{1}$}}%
\put(5.0000,-6.0000){\makebox(0,0){$\bf{3}$}}%
\put(13.0000,-3.0000){\makebox(0,0){$\bf{2}$}}%
\put(18.0000,-11.0000){\makebox(0,0){$\bf{2}$}}%
\put(20.0000,-11.0000){\makebox(0,0){$3x$}}%
\put(17.5000,-5.5000){\makebox(0,0){$2x+y+3$}}%
\put(6.0000,-4.0000){\makebox(0,0){$3y-3$}}%
\put(5.0000,-9.0000){\makebox(0,0){$0$}}%
\end{picture}%
\caption{$D(f_{3}(x,y))$}
\label{exm3}
\end{minipage}
\end{figure} 
\end{exm}

\begin{dfn}\cite[Definition 4.10]{M}
Let $X=\R^{n}/\Lambda$ be an $n$-dimensional tropical torus.

A {\it divisor} $D$ on $X$ is a balanced weighted polyhedral complex on $X$.
Any divisor can be locally written
as $D(f)$ for a rational function $f$ on an open subset of $X$.
In other words, a divisor is given by data $\{ (U_{i},f_{i}) \}$ where
$\{ U_{i} \}$ is a covering  of $X$ and $\{ f_{i}\colon U_{i}\to \R \}$ is a rational function
such that $f_{i}-f_{j}$ is a $\Z$-affine linear function on $U_{i}\cap U_{j}$.

We identify two divisors if they become the same after subdivision and deletion of
cells of weight zero.
The set of divisors on $X$ is called the divisor group of $X$,
denoted $\operatorname{Div}(X)$.
Here the sum of divisors on $X$ is defined by
a common subdivision and the sum of weight functions (restricted to the subdivision), 
and the inverse of a divisor is defined by a divisor whose support is the original support
and whose weight function is $-1$ times the original weight.
\end{dfn}

In terms of local data,
the sum of divisors $\{ (U_{i},f_{i}) \}$ and $\{ (V_{i},g_{i}) \}$ is 
given by $\{ (U_{i}\cap V_{j},f_{i}+g_{j}) \}$
and the inverse of a divisor $\{ (U_{i},f_{i}) \}$ is given by $\{ (U_{i},-f_{i}) \}$.

As in the classical case, divisors give rise to line bundles.
Let $D$ be a divisor on $X$ and  $\{ (U_{i},f_{i}) \}$ be the local data of $D$.
Then we can define the line bundle $\mathcal{O}(D)$ associated to $D$
to be the line bundle on $X$ whose transition functions  are given by $\varphi_{ij}=f_{i}-f_{j}$.
It is clear that the above map from the divisor group to the Picard group is a group homomorphism.

Two divisors $D_{1}, D_{2}$ on $X$ are {\it linearly equivalent} if
the difference $D_{1}-D_{2}$ is given by a global rational function,
or in other words, $\mathcal{O}(D_{1})$ and $\mathcal{O}(D_{2})$ are isomorphic.

\subsection{Theta functions}
The content of this section is basically due to Mikhalkin-Zharkov \cite{MZ}.

Let $X=\R ^{n}/\Lambda $ be a tropical torus.
The sheaf $\mathcal{T}_{\Z}^{*}$ is defined by the following exact sequence of sheaves of abelian groups:
\begin{equation}\label{exactsq}
 0\to \underline{\R} \to {\rm Aff}_{\Z} \to \mathcal{T}_{\Z}^{*}\to 0
\end{equation}
where $\underline{\R}$ denote the constant sheaf on $X$ whose stalks are $\R$. 
Then $\mathcal{T}_{\Z}^{*} \cong \underline{(\Z ^{n})^{*}}$.

Taking the first sheaf cohomology,
we get {\it the Chern class map }
\[ c_{1}: H^{1}(X,{\rm Aff}_{\Z}) \to H^{1}(X,\mathcal{T}_{\Z}^{*}). \]
We obtain the following proposition
by analyzing the coboundary map of the long exact sequence.

\begin{prop}[Mikhalkin-Zharkov \cite{MZ} Section 5.1]
The image of $c_{1}$ in $H^{1}(X,\mathcal{T}_{\Z}^{*})\cong 
\Lambda ^{*}\otimes (\Z ^{n})^{*}$ is the set of
elements of $\Lambda ^{*}\otimes (\Z ^{n})^{*}$ which can be naturally
extended to symmetric bilinear forms on $\R^{n}$.
Here $\Lambda ^{*}:=\operatorname{Hom}_{\Z}(\Lambda, \Z)$.
\end{prop}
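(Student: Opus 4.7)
The plan is to realize $\image(c_{1})$ as the kernel of the connecting homomorphism $\delta\colon H^{1}(X,\mathcal{T}_{\Z}^{*})\to H^{2}(X,\underline{\R})$ coming from the long exact sequence attached to (\ref{exactsq}), and then to compute $\delta$ by passing to group cohomology of the lattice and using the universal cover $\R^{n}\to X$. Exactness of the sequence $H^{1}(X,{\rm Aff}_{\Z})\xrightarrow{c_{1}} H^{1}(X,\mathcal{T}_{\Z}^{*})\xrightarrow{\delta} H^{2}(X,\underline{\R})$ immediately gives $\image(c_{1})=\ker\delta$, so everything reduces to identifying $\delta$ as a map $\Lambda^{*}\otimes (\Z^{n})^{*}\to \wedge^{2}\Lambda^{*}\otimes \R$.

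Since $\R^{n}$ is contractible, the constant sheaves $\underline{\R}$ and $\mathcal{T}_{\Z}^{*}\cong \underline{(\Z^{n})^{*}}$ are acyclic on $\R^{n}$, and (\ref{exactsq}) then forces ${\rm Aff}_{\Z}$ to be acyclic on $\R^{n}$ as well. The Cartan-Leray spectral sequence therefore identifies $H^{k}(X,\mathcal{F})\cong H^{k}(\Lambda,\Gamma(\R^{n},\mathcal{F}))$ for each of the three sheaves, compatibly with $\delta$. This delivers the standard identifications $H^{1}(X,\mathcal{T}_{\Z}^{*})\cong \mathrm{Hom}(\Lambda,(\Z^{n})^{*})=\Lambda^{*}\otimes (\Z^{n})^{*}$ and $H^{2}(X,\underline{\R})\cong H^{2}(\Lambda,\R)\cong \wedge^{2}\Lambda^{*}\otimes \R$, where the latter is realized by antisymmetrizing group $2$-cocycles.

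Finally I would compute $\delta$ explicitly: given $\phi\in \mathrm{Hom}(\Lambda,(\Z^{n})^{*})$, lift it to a $1$-cochain $\tilde\phi\colon \Lambda\to {\rm Aff}_{\Z}(\R^{n})$ by $\tilde\phi(\lambda)(x)=\langle \phi(\lambda),x\rangle$, and apply the group-cohomology coboundary with $\Lambda$ acting by translation. A short calculation yields $(d\tilde\phi)(\lambda,\mu)(x)=-\langle \phi(\mu),\lambda\rangle$, which is constant in $x$, hence lies in $\underline{\R}$ and represents $\delta(\phi)$. Antisymmetrizing to land in $\wedge^{2}\Lambda^{*}\otimes \R$ produces $(\lambda,\mu)\mapsto \langle \phi(\lambda),\mu\rangle-\langle \phi(\mu),\lambda\rangle$, which vanishes precisely when the $\Z$-bilinear form $(\lambda,v)\mapsto \langle \phi(\lambda),v\rangle$ on $\Lambda\times \Z^{n}$ extends to a symmetric $\R$-bilinear form on $\R^{n}\times \R^{n}$ via $\Lambda\otimes \R\cong \R^{n}$ and $(\Z^{n})^{*}\otimes \R\cong (\R^{n})^{*}$. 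The main delicacy will be consistent sign and convention bookkeeping (the translation action on ${\rm Aff}_{\Z}(\R^{n})$, the antisymmetrization isomorphism $H^{2}(\Lambda,\R)\cong \wedge^{2}\Lambda^{*}\otimes \R$, and the direction of the pairing); once those are pinned down the symmetry criterion falls out essentially tautologically from the coboundary computation.
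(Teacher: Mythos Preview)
Your proposal is correct and follows exactly the approach the paper indicates (``by analyzing the coboundary map of the long exact sequence''): the paper does not actually supply a proof here but cites Mikhalkin--Zharkov, and your computation of $\delta$ via group cohomology on the universal cover, together with the antisymmetrization $(\lambda,\mu)\mapsto \langle\phi(\lambda),\mu\rangle-\langle\phi(\mu),\lambda\rangle$, is precisely the intended argument. The only caveats are the ones you already flag---sign conventions for the $\Lambda$-action on ${\rm Aff}_{\Z}(\R^{n})$ and the normalization of the isomorphism $H^{2}(\Lambda,\R)\cong\wedge^{2}\Lambda^{*}\otimes\R$---none of which affect the conclusion.
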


\begin{dfn}
If a class $[c]\in {\rm Im}\,c_{1}$ is positive definite as a symmetric form
on $\R^{n}$, it is called a {\it polarization }.
The {\it degree of a polarization} is the cardinal of the cokernel
$\Cok ( [c] )= (\Z^{n})^{*}/[c](\Lambda)$, where we identify $[c]\in \Lambda ^{*}\otimes (\Z ^{n})^{*}$
with the corresponding element of $\operatorname{Hom}_{\Z}(\Lambda, (\Z^n)^{*})$.
If a polarization $[c]\in\operatorname{Hom}_{\Z}(\Lambda, (\Z^n)^{*})$ is an isomorphism, 
$[c]$ is said to be {\it principal}.
\end{dfn}
\begin{dfn}
A {\it tropical abelian variety} is a tropical torus which has a polarization. 
A pair $(X, [c])$ of tropical abelian variety and its polarization is called
{\it polarized tropical abelian variety}.
\end{dfn}

In this section we consider tropical tori, which are not necessarily tropical abelian varieties.
We will consider abelian surfaces to calculate the intersection numbers
in Section \ref{intersection number}.

Let $X=\R /\Lambda$ be a tropical torus and
let $L$ be a tropical line bundle on $X$.
We set $Q:=c_{1}(L)\in \Lambda^{*}\otimes (\Z^{n})^{*}$.


Since the sheaf of invertible regular functions ${\rm Aff}_{\Z}$  is constant on $\R^{n}$, 
the first cohomology $H^{1}(\R^{n}, {\rm Aff}_{\Z})$ vanishes.
Thus the pull back $p^{*}L $ of $L$ by the projection $p\colon \R^{n}\to \R^{n}/\Lambda$
is isomorphic to the trivial bundle $\R^{n}\times \T$.
Fix this isomorphism once. 

The lattice $\Lambda$ acts on $\R^{n}$ by translation.
We lift this action to an action on $p^{*}L$ by
\[  \lambda \cdot (x,\xi) := (x+\lambda ,\xi), \ \lambda\in \Lambda, x\in \R^{n},\xi\in L. \]
By the isomorphism $p^{*}L \cong \R^{n}\times\T$ we get a $\Lambda$-action on $\R^{n}\times \T$.
This action can be described as
\[ \lambda\cdot (x,t) := (x+\lambda, t+Q_{\R}(\lambda ,x)+\beta_{L}(\lambda )) \]
for some $\beta_{L}\colon \Lambda \to \R$.
Here $Q_{\R}\in (\R^{n})^{*}\otimes (\R^{n})^{*}$ is the extension of $Q$ on $\R^{n}$.
By the associativity of the action, $\beta_{L}$ satisfies 
$\beta_{L} (\lambda_{1}+\lambda_{2})=
\beta_{L} (\lambda_{1})+\beta_{L} (\lambda_{2})+Q_{\R}(\lambda _{1},\lambda _{2})$
for any $\lambda_{1},\lambda_{2} \in \Lambda$. 
Thus $\beta_{L} (\lambda)-\frac{1}{2}Q_{\R}(\lambda, \lambda )$ is linear in $\lambda$.
We denote this linear function by $\alpha_{L} (\lambda)$.

\begin{dfn}
For a linear function $\alpha \colon \R^{n} \to \R$ and
$Q\in (\Lambda)^{*}\otimes (\Z^{n})^{*}$ which extends to a symmetric form on $\R^{n}$,
We define a $\Lambda$-action on $\R^{n}\times \T$ by
\[ 
\lambda \cdot (x,t)
:= (x+\lambda ,t+Q_{\R}(\lambda ,x)+\frac{1}{2}Q_{\R}(\lambda, \lambda)+\alpha (\lambda))
\]
for all $\lambda \in \Lambda, x\in \R^{n}, t\in \T$.
We can easily check that this defines a $\Lambda$-action.
Then we define the line bundle $L(Q,\alpha )$ on $X=\R^{n}/\Lambda$ by $(\R^{n}\times \T)/\Lambda$.
The local trivializations of $L(Q,\alpha )$ are induced from 
the given trivialization of the trivial bundle $\R^{n}\times \T$.
\end{dfn}

Now we get the following proposition.
\begin{prop}\label{LQA}
The map $L(\cdot ,\cdot ): \operatorname{Im}c_{1}\times (\R^{n})^{*}\to H^{1}(X, {\rm Aff}_{\Z} )$
given by $(Q, \alpha)\mapsto L(Q, \alpha )$ is a surjective group homomorphism.
Moreover, the kernel of this homomorphism is $0\times (\Z^{n})^{*}$, that is,
$L(Q,\alpha)\cong L(Q,\alpha +\gamma)$ for any $\gamma\in (\Z^{n})^{*}$.
\end{prop}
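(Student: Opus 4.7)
My plan is to establish the three assertions separately: the map is a group homomorphism, it is surjective, and its kernel is exactly $0 \times (\Z^n)^*$.

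For the homomorphism property, I would observe that the twist $Q_{\R}(\lambda,x) + \tfrac{1}{2}Q_{\R}(\lambda,\lambda) + \alpha(\lambda)$ defining the $\Lambda$-action on $\R^n \times \T$ is $\R$-linear in $(Q,\alpha)$. So the identity map on $\R^n \times \T$ intertwines the action defining $L(Q_1+Q_2,\alpha_1+\alpha_2)$ with the ``sum'' of the actions defining $L(Q_i,\alpha_i)$; passing to the quotient gives an isomorphism with the tensor product, exhibiting additivity. Equivalently, one compares transition functions on a trivializing cover.

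For surjectivity, given any line bundle $L$ on $X$, the construction preceding the proposition yields $Q = c_1(L) \in \operatorname{Im} c_1$ together with a map $\beta_L : \Lambda \to \R$ such that $\alpha_L(\lambda) := \beta_L(\lambda) - \tfrac{1}{2}Q_{\R}(\lambda,\lambda)$ is linear on $\Lambda$. Since $\Lambda$ is a full rank lattice in $\R^n$, $\alpha_L$ extends uniquely to an element of $(\R^n)^*$, and by construction $L \cong L(Q,\alpha_L)$.

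For the kernel, the homomorphism property reduces the problem to characterizing when $L(Q,\alpha) \cong \mathcal{O}_X = L(0,0)$. Any such isomorphism lifts, via the identification $p^*L(Q,\alpha) \cong \R^n \times \T$, to a $\Lambda$-equivariant bundle isomorphism $\R^n \times \T \to \R^n \times \T$ covering $\mathrm{id}_{\R^n}$; such a map has the form $(x,t)\mapsto (x,t+g(x))$ for an invertible regular function $g:\R^n\to\R$. Because $\R^n$ is simply connected and $g$ is locally $\Z$-affine, $g$ is globally $\Z$-affine, say $g(x) = \gamma(x) + c$ with $\gamma \in (\Z^n)^*$ and $c\in\R$. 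The equivariance condition
\[
g(x+\lambda) - g(x) = Q_{\R}(\lambda,x) + \tfrac{1}{2}Q_{\R}(\lambda,\lambda) + \alpha(\lambda) \qquad \forall\, x\in\R^n,\ \lambda\in\Lambda,
\]
has left-hand side $\gamma(\lambda)$, independent of $x$, which forces $Q_{\R}(\lambda,\cdot)\equiv 0$ for every $\lambda\in\Lambda$, so $Q=0$. Matching the constant terms then gives $\alpha|_\Lambda = \gamma|_\Lambda$, hence $\alpha = \gamma \in (\Z^n)^*$ by uniqueness of linear extension. Conversely, for any $\gamma \in (\Z^n)^*$ the choice $g(x) = \gamma(x)$ furnishes an explicit isomorphism $L(Q,\alpha+\gamma) \cong L(Q,\alpha)$.

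The step I expect to be most delicate is extracting the global $\Z$-affineness of the function $g$ from an abstract line bundle isomorphism: one must translate the isomorphism into a $\Lambda$-equivariant morphism of the trivial pullback bundles, check that local $\Z$-affine transition data assembles (via simple connectedness of $\R^n$) into a single global $\Z$-affine function, and only then use the equivariance identity to separate the contributions of $Q$ and $\alpha$. The remaining verifications are routine.
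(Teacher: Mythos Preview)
Your argument is correct, but it follows a different route from the paper's. The paper proves the proposition cohomologically: it takes the long exact sequence associated to $0\to\underline{\R}\to\operatorname{Aff}_{\Z}\to\mathcal{T}_{\Z}^{*}\to 0$, extracts the short exact sequence
\[
0\to (\R^{n})^{*}/(\Z^{n})^{*}\to H^{1}(X,\operatorname{Aff}_{\Z})\to\operatorname{Im}c_{1}\to 0,
\]
observes that $Q\mapsto L(Q,0)$ splits it, and then identifies $L(\cdot,\cdot)$ with the quotient map $\operatorname{Im}c_{1}\times(\R^{n})^{*}\to\operatorname{Im}c_{1}\times(\R^{n})^{*}/(\Z^{n})^{*}$, from which surjectivity and the kernel are immediate.

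Your approach is more hands-on: you verify the homomorphism property directly from linearity of the twist, read surjectivity off the paragraph preceding the proposition, and compute the kernel by lifting a putative isomorphism $L(Q,\alpha)\cong\mathcal{O}_{X}$ to a $\Lambda$-equivariant $\Z$-affine automorphism of $\R^{n}\times\T$ and analysing the resulting functional equation. This avoids any sheaf-cohomology bookkeeping and makes the role of $(\Z^{n})^{*}$ completely explicit; the price is the covering-space step you flagged, where one must argue that a locally $\Z$-affine function on the simply connected $\R^{n}$ is globally $\Z$-affine and that the lift is genuinely $\Lambda$-equivariant. The paper's proof, by contrast, packages all of this into the identification $H^{1}(X,\underline{\R})/H^{0}(X,\mathcal{T}_{\Z}^{*})\cong(\R^{n})^{*}/(\Z^{n})^{*}$ and the splitting, which is slicker but leaves those identifications to the reader.
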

\begin{proof}
The long exact sequence given by (\ref{exactsq}) induces the following short exact sequence;
\[
 0\to H^{1}(X, \underline{\R})/H^{0}(X,\mathcal{T}_{\Z}^{*})= (\R^{n})^{*}/(\Z^{n})^{*}
\to H^{1}(X, {\rm Aff}_{\Z}) \to \operatorname{Im}c_{1}\to 0.
\]
This splits by $L(\cdot ,0)\colon \operatorname{Im}c_{1}\to H^{1}(X, {\rm Aff}_{\Z})$,
thus $H^{1}(X, {\rm Aff}_{\Z})$ is isomorphic to $\operatorname{Im}c_{1}\times ((\R^{n})^{*}/(\Z^{n})^{*})$.
Then the map $L(\cdot ,\cdot )$ is identified with the quotient map
$ \operatorname{Im}c_{1}\times (\R^{n})^{*}\to \operatorname{Im}c_{1}\times ((\R^{n})^{*}/(\Z^{n})^{*})$.
The kernel of this homomorphism is clearly  $0\times (\Z^{n})^{*}$.
\end{proof}

Any global section of $L(Q,\alpha )$ is identified with a regular function
$\Theta\colon \R^{n}\to \R$ satisfying the following quasi-periodicity condition:
\[ \Theta (x+\lambda )=
\Theta (x)+Q_{\R}(\lambda ,x)+\beta (\lambda ) 
,\ \lambda\in \Lambda ,\ x\in \R^{n} \]
with $\beta (\lambda )$ is equal to $\alpha (\lambda )+\frac{1}{2}Q(\lambda ,\lambda )$.
The regularity implies that $\Theta$ is convex and
the quasi-periodicity implies the fact that
if $b\in (\Z^{n})^{*}$ is a slope of $\Theta$,
then $b+Q(\lambda , \cdot )$ is also a slope of $\Theta$ for any $\lambda\in \Lambda$.

By the above argument, the tropical module $\Gamma  (X,L(Q, \alpha ))$
of regular sections of $L(Q, \alpha )$ is identified with the set
\[
\left\{ \Theta \colon \R^{n}\to \R \ \middle|\ 
\begin{aligned}
&\Theta (x+\lambda )=\Theta (x)+Q_{\R}(\lambda ,x)+\beta (\lambda ),
\ \lambda\in \Lambda ,\ x\in \R^{n} \\
~&\hspace{0mm} \text{$\Theta (x)$ is a regular function.}
\end{aligned} \right\} \cup \{ -\infty \}.
\]

An element of this set which is not $\{  -\infty\}$ is called a {\it (tropical) theta function}.

\begin{rmk}
If $Q$ has a negative eigenvalue, $\beta (\lambda )$ diverges to $-\infty$ quadratically
in the direction of the eigenvector.
Then any function satisfying the quasi-periodicity is not convex,
thus no theta function exists 
\end{rmk}

For $r\in \R^{n}$, we define the translation
$\iota _{r}: \R^{n}/\Lambda \to \R^{n}/\Lambda$ by $x\mapsto x+r$.
The pulled back line bundle $\iota _{r}^{*}L(Q, \alpha )$
is given by the quotient of $\R^{n}\times \T$ by the following $\Lambda$-action.
\[
\lambda \cdot (x-r,t):=
(x-r+\lambda ,t+Q_{\R}(\lambda ,x)+\frac{1}{2}Q_{\R}(\lambda, \lambda)+\alpha (\lambda)).
\]
That is,
\begin{align*}
 \lambda \cdot (x,t)&= (x+\lambda ,t+Q_{\R}(\lambda ,x+r)+\frac{1}{2}Q_{\R}(\lambda, \lambda)+\alpha (\lambda))\\
 &=(x+\lambda ,
 t+Q_{\R}(\lambda ,x)+\frac{1}{2}Q_{\R}(\lambda, \lambda)+\alpha (\lambda)+ Q_{\R}(\lambda, r)).
\end{align*}
Thus we get $\iota_{r}^{*}L(Q, \alpha )=L(Q, \alpha +Q_{\R}(\cdot ,r))$.

\subsection{The space of theta function: positive definite case}
Given a bilinear form $Q\in \Lambda^{*}\otimes (\Z^{n})^{*}$,
let $q$ denote the linear map $\Lambda \to (\Z^{n})^{*}$, $\lambda \mapsto Q(\lambda ,\cdot )$
and let $q_{\R}$ denote the linear map $\R^{n}\to (\R^{n})^{*}$ extending $q$.

In this section, we consider the case where
$Q$ is positive definite as a symmetric bilinear form.
Then for any $\alpha\in (\R^{n})^{*}$, there exists an element $r\in \R^{n}$ such that $\alpha =q_{\R}(r)$.

Let $\Theta$ be a theta function for $L(Q,q_{\R}(r))$ and
consider the Legendre transform  of $\Theta$,
\[ \widehat{\Theta}(a)=
\max_{x\in \R^{n}} \left\{a\cdot x-\Theta (x) \right\}, a\in (\Z^{n})^{*}.\]
The maximum exists since $\Theta$ diverges to $+\infty$ quadratically
as $|x|\to \infty$; this follows by the positive definiteness of $Q$ and quasi-periodicity of $\Theta$.
The Legendre transform $\widehat{\Theta}\colon (\Z^{n})^{*}\to \R$
is convex, that is, 
$\widehat{\Theta}(a)\leq \sum t_{i}\widehat{\Theta}(a_{i})$
for $a, a_{i}\in (\Z^{n})^{*}$ and $0\leq t_{i}\leq 1$ satisfying  $\sum t_{i}=1$, $a=\sum t_{i}a_{i}$.

The Legendre transform $\widehat{\Theta}$ satisfies the following quasi-periodicity:
\begin{align*}
\widehat{\Theta}(a+q(-\lambda ))
&= \max_{x\in \R^{n}}\left\{(a+q(-\lambda ))\cdot x-\Theta (x) \right\}\\
&=\max_{x\in \R^{n}}\left\{a\cdot x-\Theta (x+\lambda )
+\frac{1}{2}Q_{\R}(\lambda ,\lambda)+q_{\R}(r)(\lambda ) \right\}\\
&=\max_{x\in \R^{n}}\left\{a\cdot (x-\lambda )-\Theta (x)
+\frac{1}{2}Q_{\R}(\lambda ,\lambda)+q_{\R}(r)(\lambda )\right\}\\
&=\max_{x\in \R^{n}} \left\{a\cdot x -\Theta (x) \right\}
-a\cdot \lambda +\frac{1}{2}Q_{\R}(\lambda ,\lambda)+Q_{\R}(\lambda ,r)\\
&=\widehat{\Theta}(a)-a\cdot \lambda +\frac{1}{2}Q_{\R}(\lambda ,\lambda)+Q_{\R}(\lambda ,r)
\end{align*}
for all $a\in (\Z^{n})^{*}$, $\lambda\in\Lambda$.
Here the second equality follows by the quasi-periodicity of $\Theta$ and the third equality
follows by replacing $x$ with $x-\lambda$.
Moreover, the Legendre transformation gives one-to-one correspondence between
the set of theta functions for $L(Q,q_{\R}(r))$ and the following set:
\begin{eqnarray}\label{Legendre}
\left\{ \eta \colon (\Z^{n})^{*}\to \R \ \middle|\ 
\begin{aligned}
&\text{$\eta $ is convex and for all }\lambda\in \Lambda ,\ a\in (\Z^{n})^{*}, \\
~&\hspace{0mm}
\eta (a+q(\lambda ))=\eta (a)+a\cdot\lambda +\frac{1}{2}Q_{\R}(\lambda ,\lambda)-Q_{\R}(r, \lambda) 
\end{aligned} \right\}.
\end{eqnarray}

We choose a subset $B\subset (\Z^{n})^{*}$ such that the natural map
$B\to (\Z^{n})^{*}/q(\Lambda )$ is a bijection.
Then $\widehat{\Theta}$ is determined by the values at $b\in B$ by the quasi-periodicity.
Because of the convexity of the theta function $\Theta$, the inverse Legendre transform of 
$\widehat{\Theta}$ is $\Theta$. Thus $\Theta(x)$ can be written as follows:
\begin{align*}
\Theta (x)
&=\max_{a\in (\Z^{n})^{*}} \left\{a\cdot x-\widehat{\Theta}(a) \right\}\\
&=\max_{b\in B}\max_{\lambda \in \Lambda }\left\{(b+q(-\lambda ))\cdot x
-\widehat{\Theta}(b+q(-\lambda ))\right\}\\
&=\max_{b\in B}\max_{\lambda \in \Lambda }\left\{(b+q(-\lambda ))\cdot x
-\widehat{\Theta}(b)+b\cdot \lambda -\frac{1}{2}Q_{\R}(\lambda ,\lambda )
-Q_{\R}(\lambda ,r)\right\} \\
&=\max_{b\in B}\left\{\Theta_{b}(x)+\frac{1}{2}Q_{\R}
(q_{\R}^{-1}(b)-r,q_{\R}^{-1}(b)-r)-\widehat{\Theta}(b)\right\},
\end{align*}
where  
\begin{equation*}
\begin{split}
\Theta_{b}(x)
&:=\max_{\lambda \in \Lambda }\left\{(b+q(\lambda ))\cdot x-\frac{1}{2}Q_{\R}
(\lambda +q_{\R}^{-1}(b)-r ,\lambda +q_{\R}^{-1}(b)-r)\right\}. 
\end{split}
\end{equation*}

By the above argument, we get the fact that theta functions for $L(Q,q_{\R}(r) )$ exist and
every theta function for $L(Q,q_{\R}(r) )$ can be written in the form:
\[
\max_{b\in B}\{ \Theta_{b}(x)+s_{b} \}
\]
for some $\{  s_{b} \}\in \T^{B}$.
The set of slopes of $\Theta_{b}$ is $b+q(\Lambda )$
and the sets $b+q(\Lambda ),b\in B$ are pairwise disjoint.
Thus $\Theta_{b}$ cannot be written as any tropical linear combination
 of elements of $H^{0} (X,L(Q,q_{\R}(r) ))\backslash \T \Theta_{b}$.
Consequently any minimal generating set of the $\T$-module $\Gamma (X,L(Q,q_{\R}(r) ))$
 is of the form $\{ \Theta_{b}+s_{b} \}_{b\in B}$; in particular, 
the minimal number of generators is $|B|$.
Let $\pi\colon \T^{B}\to H^{0}(X,L(Q,q_{\R}(r) ))$ be the surjective tropical homomorphism
$\displaystyle (s_{b})_{b\in B}\mapsto \max_{b\in B}\{ \Theta_{b}+s_{b} \}$.
 \begin{dfn}\label{inj}
We define the map $\varphi \colon \Gamma (X,L(Q,q_{\R}(r) ))\to \T^{B}$ by
\[ \varphi (\Theta )=\left( \varphi ^{b}(\Theta ) \right)_{b\in B}, \]
where
\begin{align*}
\varphi ^{b}(\Theta )&:=\max \left\{ s_{b}\in \T \ \middle|\ 
\Theta (x)\geq \Theta_{b}(x)+s_{b}\ 
\forall x\in \R^{n} \right\} \\
&= \min_{x\in \R^{n}}\left\{ \Theta (x)-\Theta_{b}(x) \right\} .
\end{align*}
\end{dfn}

The composition $\pi \circ \varphi$
is the identity map of $\Gamma (X,L(Q,q_{\R}(r) ))$.
Thus the map $\varphi $ is injective and we can induce a topology of
$\Gamma (X,L(Q,q_{\R}(r) ))$ by identifying it with
the subspace $\varphi (\Gamma (X,L(Q,q_{\R}(r) )) )\subset \T^{B}$.

\begin{rmk}
The map $\varphi \colon \Gamma (X,L(Q,q_{\R}(r) ))\to \T^{B}$
is not a map of tropical modules.
This map measures  the ``meaningful'' coefficients $s_{b}$ of $\Theta_{b}$ appearing in the
expression $\Theta (x)=\max_{b\in B}\{ \Theta_{b}(x)+s_{b} \}$.
\end{rmk}

\begin{prop}\label{prop}
Let $r(b)$ denote $\frac{1}{2}Q_{\R}(q_{\R}^{-1}(b)-r, q_{\R}^{-1}(b)-r)$. Then
\[ \varphi^{b}(\Theta )=-\widehat{\Theta }(b)+r(b)\]
\end{prop}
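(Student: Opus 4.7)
The plan is to expand the definition of $\Theta_{b}$ as a maximum over $\lambda \in \Lambda$, turn $\varphi^{b}(\Theta) = \min_{x}\{\Theta(x) - \Theta_{b}(x)\}$ into a nested minimum over $(x,\lambda)$, and then eliminate $\lambda$ by translating the $x$-variable and invoking the quasi-periodicity of $\Theta$. After the dust settles, the expression should collapse to $\min_{y}\{\Theta(y) - b\cdot y\} + r(b)$, which is precisely $-\widehat{\Theta}(b) + r(b)$ by definition of the Legendre transform.

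Concretely, I would first use the formula for $\Theta_{b}$ to write
\[
\varphi^{b}(\Theta) = \min_{x\in\R^{n}}\min_{\lambda\in\Lambda}\Bigl\{\Theta(x) - (b+q(\lambda))\cdot x + \tfrac{1}{2}Q_{\R}(\lambda + q_{\R}^{-1}(b) - r,\, \lambda + q_{\R}^{-1}(b) - r)\Bigr\}.
\]
Then I would substitute $y = x - \lambda$. By the quasi-periodicity condition for $L(Q, q_{\R}(r))$, namely $\Theta(y+\lambda) = \Theta(y) + Q_{\R}(\lambda, y) + \tfrac{1}{2}Q_{\R}(\lambda,\lambda) + Q_{\R}(r,\lambda)$, the term $\Theta(x) - q(\lambda)\cdot x$ becomes $\Theta(y) - \tfrac{1}{2}Q_{\R}(\lambda,\lambda) + Q_{\R}(r,\lambda)$. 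Combined with $-b\cdot x = -b\cdot y - Q_{\R}(q_{\R}^{-1}(b), \lambda)$ and the expanded quadratic, the $\lambda$-dependent terms should cancel by the symmetry of $Q_{\R}$, leaving only the constant $r(b)$.

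More precisely, I expect the coefficient of $\lambda$ in the bracket to reduce to $Q_{\R}(r - q_{\R}^{-1}(b) + q_{\R}^{-1}(b) - r,\, \lambda) = 0$, and the quadratic $\tfrac{1}{2}Q_{\R}(\lambda,\lambda)$ pieces to cancel exactly; the only surviving constant is $\tfrac{1}{2}Q_{\R}(q_{\R}^{-1}(b) - r,\, q_{\R}^{-1}(b) - r) = r(b)$. This leaves
\[
\varphi^{b}(\Theta) = \min_{y\in\R^{n}}\{\Theta(y) - b\cdot y\} + r(b) = -\widehat{\Theta}(b) + r(b),
\]
and the minimum over $\lambda$ becomes trivial since nothing in the bracket depends on $\lambda$ anymore.

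The main step requiring care is the bookkeeping for the algebraic cancellation in the second step; this is essentially a mechanical but delicate computation relying on (i) the symmetry of $Q_{\R}$, (ii) the identity $b = q_{\R}(q_{\R}^{-1}(b))$, and (iii) the particular form $\alpha = q_{\R}(r)$ of the linear part so that quasi-periodicity of $\Theta$ contributes a $Q_{\R}(r,\lambda)$ term that pairs correctly with the $-Q_{\R}(r,\lambda)$ coming from expanding the quadratic in $\Theta_{b}$. The existence of the minimum in $y$ is guaranteed by the positive-definiteness of $Q$, which forces $\Theta(y) - b\cdot y \to +\infty$ as $|y|\to\infty$, so the Legendre transform is finite and attained, and the interchange of minima is valid.
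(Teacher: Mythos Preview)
Your proof is correct but takes a different route from the paper. The paper argues geometrically: it introduces the closed region $D^{b}_{0}\subset\R^{n}$ on which $\Theta_{b}$ is affine with slope $b$, so that $\Theta_{b}(x)=b\cdot x - r(b)$ there; since $\Theta-\Theta_{b}$ is $\Lambda$-periodic its minimum is already attained on $D^{b}_{0}$, and one can then replace $\Theta_{b}$ by the single affine piece $b\cdot x - r(b)$ and extend the minimum back to all of $\R^{n}$ using $\Theta_{b}(x)\ge b\cdot x - r(b)$ (this is the ``convexity of $\Theta_{b}$'' step). Your approach instead turns $\min_{x}\{\Theta(x)-\max_{\lambda}(\ldots)\}$ into a joint minimum over $(x,\lambda)$, substitutes $y=x-\lambda$, and uses the quasi-periodicity of $\Theta$ directly to show that every $\lambda$-dependent term cancels identically, leaving $\min_{y}\{\Theta(y)-b\cdot y\}+r(b)$. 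Your computation is more direct and avoids the fundamental-domain bookkeeping; the paper's route, on the other hand, establishes along the way the domains $D^{b}_{\lambda}$ and the identity $\Theta_{b}|_{D^{b}_{0}}=b\cdot x - r(b)$, both of which it reuses in the proof of Theorem~\ref{thm1} to reduce a minimum over $\R^{n}$ to one over a compact set.
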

\begin{proof}
Recall the set of slopes of a generator
\[ \Theta_{b}(x)
=\max_{\lambda \in \Lambda }\left\{(b+q(\lambda ))\cdot x-\frac{1}{2}Q_{\R}
(\lambda +q_{\R}^{-1}(b)-r ,\lambda +q_{\R}^{-1}(b)-r)\right\}\]
is exactly $\left\{ b+q(\lambda ) \ \middle|\  \lambda \in \Lambda \right\}$.
Let $D^{b}_{\lambda}$ be the maximal closed domain on which
$\Theta_{b}(x)$ is an affine linear function of slope $b+q(\lambda )$.
By the quasi-periodicity, we get
\[ D^{b}_{\lambda_{1}+\lambda_{2}}=
\lambda_{2}+D^{b}_{\lambda_{1}}\]
for $\lambda_{1},\lambda_{2}\in \Lambda$
and thus the image of $D^{b}_{\lambda}$ by the quotient map $\R^{n}\to X=\R^{n}/\Lambda$
is the whole of $X$.
It is clear that
$\Theta_{b}(x)=b\cdot x-r(b)$ holds for any point $x$ in the fundamental domain $D^{b}_{0}$ of $X$.

Since $\Theta (x)-\Theta _{b}(x)$ is $\Lambda$-periodic by the quasi-periodicity of $\Theta(x)$,
\begin{align*}
\varphi^{b}(\Theta )&=
\min_{x\in \R^{n}}\{ \Theta (x)-\Theta _{b}(x) \}\\
&=\min_{x\in D_{0}^{b}}\{ \Theta (x)-\Theta _{b}(x) \}\\
&=\min_{x\in D_{0}^{b}}\{ \Theta (x)-(b\cdot x-r(b)) \}\\
&=\min_{x\in \R^{n}}\{ \Theta (x)-(b\cdot x-r(b)) \} 
\text{\ \ \ (by the convexity of $\Theta_{b}$)}
\\
&=-\max_{x\in \R^{n}}\{ b\cdot x -\Theta (x) \} +r(b) \\
&=-\widehat{\Theta}(b)+r(b).
\end{align*}
\end{proof}

\begin{exm}
Let us consider a tropical torus $\R/\Z$ and a line bundle $L(3,0)$.
We take $B=\{ 0,1,2 \}$ and consider the generators of $\Gamma (\R/\Z ,L(3,0))$,
\[
\Theta_{b}(x)=\max_{n\in \Z}\left\{  
(3n+b)x-\frac{3}{2}\left( \frac{3n+b}{3}\right) ^{2}
\right\},\ b\in B.
\]

Let $\varphi \colon \Gamma (\R/\Z ,L(3,0))\to \T^{3}$ be the injection
as in Definition \ref{inj}.
Let us calculate the set
$\{ (0,r,s) \mid r,s\in \R \}\cap \varphi (\Gamma (\R/\Z ,L(3,0)))$; this is
identified with the tropical projectivization of $\varphi (\Gamma (\R/\Z ,L(3,0)))$.

The strategy of calculation is as follows.
First we calculate the region of possible values of $\varphi^1(\Theta)$
under the condition that $\varphi^0(\Theta) = 0$.
We can easily see that $\varphi^1(\Theta) $ ranges over a closed interval
whose lower or upper bounds are given by the minimum or maximal of the function:
\[\Theta_{0}(x)-\Theta_{1}(x). \]
By computing these values, we get the range of $\varphi^{1}(\Theta )$.
It is $\left[ -\dfrac{1}{3},\dfrac{1}{3}\right]$.

Next, we fix the value of $\varphi^{1}(\Theta )$ and denote the value by $r$.
We calculate the upper and lower bound of $\varphi^{2}(\Theta )$.
The lower bound is the minimal value of the difference of theta functions
\[ \max\{ \Theta_{0}(x), \Theta_{1}(x)+r  \}
-\Theta_{2}(x).\]
The upper bound is
the supremum of $s$ such that 
the terms $\Theta_{0}$ and $\Theta_{1}+r$ cannot be removed from the equation
$\Theta = \max (\Theta_{0}, \Theta_{1}+r, \Theta_{2}+s)$, that is, 
the maximal $s$ satisfying the following two equations: 
\begin{align*}
0&\geq\min_{x\in \R^{2}}\left\{ \max \left\{ \Theta_{1}(x)+r,
\Theta_{2}(x)+s \right\} -\Theta_{0}(x)\right\} ; \\
0&\geq\min_{x\in \R^{2}}\left\{ \max \left\{ \Theta_{0}(x)
,\Theta_{2}(x)+s \right\} -(\Theta_{1}+r)\right\} .
\end{align*}
By computing these values, we get the range of $\varphi^{2}(\Theta )$.
It is \[  \frac{1}{2}r+\frac{1}{6}\leq \varphi^{2}(\Theta )\leq \min \{ 2r,-r \} +\frac{1}{3}. \]
Thus the slice $\{ (0,r,s) \mid r,s\in \R \}\cap \varphi (\Gamma (\R/\Z ,L(3,0)))$
 is given by
\[ \left\{
(r,s)\in \R^{2}\middle|\ s\leq 2r+\frac{1}{3}, r\leq 2s+\frac{1}{3}, r+s\leq \frac{1}{3}
\right\}. \]
This is a $2$-dimensional simplex as in Figure \ref{figrei2}
and  each of generators corresponds to a vertex of this simplex.
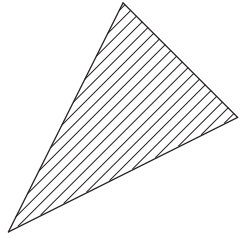
\begin{figure}[h]
\center
{\unitlength 0.1in%
\begin{picture}(12.0000,12.0000)(2.0000,-14.0000)%
%
\special{pn 8}%
\special{pa 800 200}%
\special{pa 1400 800}%
\special{fp}%
\special{pa 1400 800}%
\special{pa 200 1400}%
\special{fp}%
\special{pa 200 1400}%
\special{pa 800 200}%
\special{fp}%
%
\special{pn 4}%
\special{pa 1050 450}%
\special{pa 300 1200}%
\special{fp}%
\special{pa 1080 480}%
\special{pa 240 1320}%
\special{fp}%
\special{pa 1110 510}%
\special{pa 250 1370}%
\special{fp}%
\special{pa 1140 540}%
\special{pa 370 1310}%
\special{fp}%
\special{pa 1170 570}%
\special{pa 490 1250}%
\special{fp}%
\special{pa 1200 600}%
\special{pa 610 1190}%
\special{fp}%
\special{pa 1230 630}%
\special{pa 730 1130}%
\special{fp}%
\special{pa 1260 660}%
\special{pa 850 1070}%
\special{fp}%
\special{pa 1290 690}%
\special{pa 970 1010}%
\special{fp}%
\special{pa 1320 720}%
\special{pa 1090 950}%
\special{fp}%
\special{pa 1350 750}%
\special{pa 1210 890}%
\special{fp}%
\special{pa 1380 780}%
\special{pa 1330 830}%
\special{fp}%
\special{pa 1020 420}%
\special{pa 360 1080}%
\special{fp}%
\special{pa 990 390}%
\special{pa 420 960}%
\special{fp}%
\special{pa 960 360}%
\special{pa 480 840}%
\special{fp}%
\special{pa 930 330}%
\special{pa 540 720}%
\special{fp}%
\special{pa 900 300}%
\special{pa 600 600}%
\special{fp}%
\special{pa 870 270}%
\special{pa 660 480}%
\special{fp}%
\special{pa 840 240}%
\special{pa 720 360}%
\special{fp}%
\special{pa 810 210}%
\special{pa 780 240}%
\special{fp}%
\end{picture}}%
\caption{the slice $\{ (0,r,s) \mid r,s\in \R \}\cap \varphi (\Gamma (\R/\Z ,L(3,0)))$}
\label{figrei2}
\end{figure}
\end{exm}

\begin{exm}\label{Q}
In the previous example, the projectivization of $\Gamma (X,L)$ is a simplex
and its vertices correspond to generators.
However it is not always true. We give a counter-example.

Let $q_{\R}\colon \R^{2}\to (\R^{2})^{*}$ be the linear map given by
\footnotesize
$\left( \begin{array}{cc} 2&1\\ 1&2\end{array}\right)$
\normalsize
and let $\Lambda$ be the lattice $q_{\R}^{-1}(2(\Z^{2})^{*})$.
These data define an element $Q\in \Lambda^{*}\otimes (\Z^2)^{*}$ given by $q_{\R}$
and a tropical abelian surface $X:=(\R^{2}/\Lambda ,Q)$.

Let us consider the tropical line bundle $L=L(Q,0 )$ on $X$.
We take $B=\{ b_{ij} \}_{i,j\in \{ 0,1\}}$ with $b_{ij}={}^t\! ( i, j) $ 
and consider the generators of $\Gamma (X,L)$,
\[ \Theta_{b_{ij}}(x)=
\max_{k,l\in \Z}\left\{ (2k+i)x+(2l+j)y-
\frac{1}{3}\left( (2k+i)^{2}-(2k+i)(2l+j)+(2l+j)^{2}\right) \right\} . \]

Define the injection
$\varphi \colon \Gamma (X,L)\to \T^{4}$ as
\[ \varphi (\Theta )=(\varphi^{00}(\Theta ),\varphi^{01}(\Theta ),
\varphi^{10}(\Theta ),\varphi^{11}(\Theta )) \]
where $\varphi^{ij}(\Theta ):=\varphi^{b_{ij}}(\Theta ) $ is as in Definition \ref{inj}.
By computing as in the previous example,
the slice $\{ (0,r,s,t) \mid r,s,t\in \R \}\cap \varphi (\Gamma (X,L))$
is given by
\begin{equation*}
\left\{ (0, r,s,t) \ \middle|\  
\begin{split}
~&-\frac{1}{3}\leq r\leq \frac{1}{3},\ 
\max \{ 0,r \} -\frac{1}{3} \leq s \leq \min \{ 0,r \} +\frac{1}{3},\\
&\max \{ 0,r,s \} -\frac{1}{3} \leq t \leq \min \{ 0,r,s \} +\frac{1}{3}
\end{split}
\right\}. 
\end{equation*}
This region can also be written as 
\[ \bigcup_{ -\frac{1}{6}\leq a\leq \frac{1}{6}}
\left\{ (r,s,t)+(a,a,a) \ \middle|\  
|r|\leq \frac{1}{6},\ |s|\leq \frac{1}{6},\ |t|\leq \frac{1}{6},\ 
\right\} . \]
This is the convex polyhedron that has $14$ vertices as in Figure \ref{figrei}.

The $4$ generators of the tropical module $\Gamma (X,L)$ correspond to
vertices of this polyhedron, but this polyhedron has more vertices.

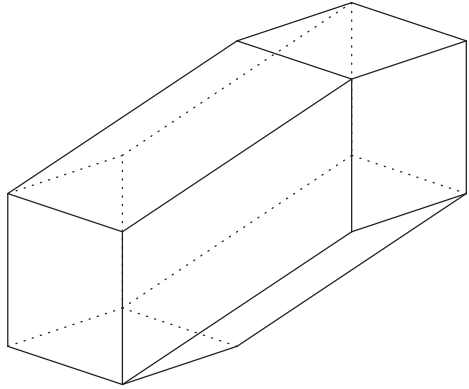
\begin{figure}[h]
\center
\unitlength 0.1in
\begin{picture}( 24.0000, 20.0000)(  2.0000,-22.0000)
%
{\color[named]{Black}{%
\special{pn 8}%
\special{pa 200 1200}%
\special{pa 200 2000}%
\special{fp}%
\special{pa 200 2000}%
\special{pa 800 2200}%
\special{fp}%
\special{pa 800 2200}%
\special{pa 1400 2000}%
\special{fp}%
\special{pa 200 1200}%
\special{pa 800 1400}%
\special{fp}%
\special{pa 2000 1400}%
\special{pa 800 2200}%
\special{fp}%
\special{pa 2000 1400}%
\special{pa 2600 1200}%
\special{fp}%
\special{pa 2600 1200}%
\special{pa 1400 2000}%
\special{fp}%
\special{pa 2600 1200}%
\special{pa 2600 400}%
\special{fp}%
\special{pa 2600 400}%
\special{pa 2000 200}%
\special{fp}%
\special{pa 2000 200}%
\special{pa 1400 400}%
\special{fp}%
\special{pa 1400 400}%
\special{pa 200 1200}%
\special{fp}%
\special{pa 1400 400}%
\special{pa 2000 600}%
\special{fp}%
\special{pa 2000 600}%
\special{pa 2600 400}%
\special{fp}%
\special{pa 2000 600}%
\special{pa 2000 1400}%
\special{fp}%
\special{pa 2000 600}%
\special{pa 800 1400}%
\special{fp}%
\special{pa 800 1400}%
\special{pa 800 2200}%
\special{fp}%
}}%
%
{\color[named]{Black}{%
\special{pn 8}%
\special{pa 200 2000}%
\special{pa 800 1800}%
\special{dt 0.045}%
\special{pa 2000 200}%
\special{pa 2000 1000}%
\special{dt 0.045}%
\special{pa 2000 1000}%
\special{pa 2600 1200}%
\special{dt 0.045}%
\special{pa 2000 1000}%
\special{pa 800 1800}%
\special{dt 0.045}%
\special{pa 1400 2000}%
\special{pa 800 1800}%
\special{dt 0.045}%
\special{pa 800 1800}%
\special{pa 800 1000}%
\special{dt 0.045}%
\special{pa 800 1000}%
\special{pa 200 1200}%
\special{dt 0.045}%
\special{pa 800 1000}%
\special{pa 2000 200}%
\special{dt 0.045}%
}}%
\end{picture}%
\caption{the slice $\{ (0,r,s,t) \mid r,s,t\in \R \}\cap \varphi (\Gamma (X,L))$}
\label{figrei}
\end{figure}
\end{exm}

\begin{lem}\label{convexPL}
Let $D$ be a compact convex polyhedron in $\R^{n}$
and $f\colon D\times \R^{m}\to \R$ be 
a convex piecewise-linear function with finitely many slopes\footnote
{A convex piecewise-linear function means 
a function which is locally written as the maximum of finitely many affine linear functions.
It has finitely many slopes if and only if
it can be written as the maximum of finitely many affine linear functions. }.

We define the function $g\colon \R^{m}\to \R$ as
\[ g(y):=\min_{x\in D}\{ f(x,y) \} . \]
Then $g$ is a convex piecewise-linear function with finitely many slopes.
\end{lem}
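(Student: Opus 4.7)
The plan is to pass to epigraphs and invoke Fourier-Motzkin elimination. Since $f$ has finitely many slopes, I can write it as a maximum $f(x,y)=\max_{i\in I}(a_{i}\cdot x+b_{i}\cdot y+c_{i})$ over a finite index set, and since $D$ is a polyhedron it is cut out by finitely many affine inequalities. Combining these presentations, the epigraph
$$E := \{(x,y,t) \in D \times \R^{m} \times \R : t \geq f(x,y)\}$$
is a polyhedron in $\R^{n} \times \R^{m} \times \R$ given explicitly by finitely many linear inequalities.

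Next, I would project $E$ onto $\R^{m} \times \R$ by forgetting the $x$-coordinate. Compactness of $D$ and continuity of $f$ guarantee that the minimum defining $g(y)$ is attained and finite for every $y$, which identifies $\pi(E)$ with the epigraph of $g$. By Fourier-Motzkin elimination, the projection of a polyhedron is a polyhedron, so $\operatorname{epi}(g)$ is itself a polyhedron in $\R^{m} \times \R$.

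To conclude, I would extract the inequalities defining $\operatorname{epi}(g) = \bigcap_{j} \{(y,t) : \alpha_{j}\cdot y + \beta_{j} t \leq \gamma_{j}\}$. Because $\operatorname{epi}(g)$ is upward-closed in $t$ and projects surjectively onto $\R^{m}$, every defining inequality either has $\beta_{j} < 0$ or is vacuous; the former normalize to the form $t \geq \alpha_{j}'\cdot y + c_{j}'$. Thus $g(y) = \max_{j}(\alpha_{j}'\cdot y+c_{j}')$ over a finite index set, which exhibits $g$ as a convex piecewise-linear function with finitely many slopes.

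The only non-elementary ingredient is Fourier-Motzkin elimination, which is standard; convexity of $g$ is then automatic from the final max-of-affines presentation (or directly from the convexity of $f$ and $D$). The only subtlety worth double-checking is that $\operatorname{epi}(g)$ coincides with $\pi(E)$ on the nose rather than merely with its closure, but this comes for free from attainment of the minimum, so I do not expect any serious obstacle.
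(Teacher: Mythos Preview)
Your proof is correct and follows essentially the same route as the paper: pass to the epigraph of $f$, observe it is a polyhedron, project out the $x$-variable, and identify the image with the epigraph of $g$. The paper is terser (it does not name Fourier--Motzkin and does not explicitly extract the max-of-affines form at the end), but the underlying argument is the same; your version is in fact slightly more careful about why the projection equals $\operatorname{epi}(g)$ exactly.
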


\begin{proof}
Let $p\colon D\times \R^{n}\times \R\to \R^{n}\times \R$ be the projection and
 $\Delta_{f}$ be the upper convex hull of graph of $f$, that is,
\[ \Delta_{f}:= \left\{ (x,y,r)\in D\times \R^{n}\times \R \ \middle| \  r\geq f(x,y) \right\}. \]
Since the set of slopes of $f$ is finite, $\Delta_{f}$ has only finitely many faces. 
Thus $\Delta_{f}$ is a convex polyhedron
and the image of the projection $p( \Delta_{f})$ is also a convex polyhedron.
On the other hand $p( \Delta_{f})$ is described as
\begin{align*}
p( \Delta_{f})&= \left\{ (y,r)\in \R^{n}\times \R \ \middle| \  r\geq f(x,y) \text{ for all } x\in D \right\}\\
&=\left\{ (y,r)\in \R^{n}\times \R \ \middle| \  r\geq \min_{x\in D}\{ f(x,y)\} \right\}.
\end{align*}
That is, $p( \Delta_{f})$ is the upper convex hull of the graph of $g$.
Therefore $g$ is a convex piecewise-linear function.
Since the upper convex hull $p( \Delta_{f})$ of $g$ has only finitely many faces, 
the convex piecewise-linear function $g$ has only finitely many slopes.
\end{proof}

\begin{thm}\label{thm1}
Let $L$ be a tropical line bundle on a tropical abelian variety $X=\R^{n}/\Lambda$
such that $Q=c_{1}(L)$ is positive definite.
Then $\Gamma (X,L)$ is generated by $|B|=\left| \operatorname{Cok}\,q\right|$ elements 
as a $\T$-module.
Moreover, $\Gamma (X,L)$ is identified with
an $|B|$-dimensional convex polyhedron in $\T^{B}$ via $\varphi$ of Definition \ref{inj} 
and its projectivization is identified with a compact polyhedron in $\R^{|B|-1}$.
\end{thm}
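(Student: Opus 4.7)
The plan is to leverage the Legendre transform bijection between theta functions and the convex quasi-periodic functions on $(\Z^n)^*$ described in (\ref{Legendre}). The generation statement and the minimality of $|B|$ generators are already established in the discussion preceding the theorem, so the core task is to identify the image of the injection $\varphi$ of Definition \ref{inj} inside $\T^B$.

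By Proposition \ref{prop}, $\varphi^b(\Theta) = r(b) - \widehat{\Theta}(b)$. A direct computation from (\ref{Legendre}) together with the expansion of $r(b + q(\lambda))$ shows that $r(a) - \widehat{\Theta}(a) = s_{[a]}$ for every $a \in (\Z^n)^*$, where $[a] \in B$ denotes the $q(\Lambda)$-coset representative of $a$. Consequently, a tuple $(s_b)_{b \in B}$ lies in the image of $\varphi$ if and only if the function $\eta_s(a) := r(a) - s_{[a]}$ is convex on $(\Z^n)^*$. Since $r$ is a strictly convex positive-definite quadratic and the correction $s_{[a]}$ is $q(\Lambda)$-periodic hence bounded, only lattice configurations of bounded diameter can witness a failure of convexity; modulo $q(\Lambda)$-translation these reduce to finitely many linear inequalities in $(s_b)$, so the image is a convex polyhedron in $\R^B \subset \T^B$. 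Lemma \ref{convexPL} is the natural tool for making this reduction rigorous.

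For the remaining assertions: at $(s_b) = 0$ we have $\eta_s = r$, which is strictly convex on the whole of $(\R^n)^*$, so sufficiently small perturbations of $(s_b)$ preserve convexity on the lattice; hence the polyhedron contains a Euclidean neighborhood of the origin in $\R^B$ and has full dimension $|B|$. The diagonal shift $(s_b) \mapsto (s_b + c)$ amounts to translating $\eta_s$ by a constant and preserves convexity, so the projectivization descends to a subset of $\R^{|B|-1}$; for compactness I would argue that if some difference $s_{b_0} - s_{b_1}$ were unbounded, then along a lattice simplex with vertices in the $q(\Lambda)$-orbit of $b_0$ containing $b_1$ in its interior, the bounded periodic correction cannot compensate for the quadratic growth of $r$, forcing a violation of the convexity inequality defining the image. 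The principal technical obstacle throughout is turning "only nearby simplices matter" into a genuinely finite defining system, and this is precisely where the positive definiteness of $Q$ (equivalently, the strict convexity of $r$) enters essentially.
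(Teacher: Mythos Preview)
Your approach through the Legendre dual is genuinely different from the paper's and is essentially correct. You characterize $\operatorname{Im}\varphi$ as the set of tuples $(s_b)$ for which $\eta_s(a)=r(a)-s_{[a]}$ is convex on $(\Z^n)^*$; the verification that $r(\cdot)-\widehat{\Theta}(\cdot)$ is $q(\Lambda)$-periodic is correct, and this gives an elegant description tied to regular subdivisions. The paper instead stays on the $\Theta$-side: it rewrites membership in $\operatorname{Im}\varphi$ as the ``non-redundancy'' condition
\[
t_b \ \ge\ \min_{x\in D_0^b}\Bigl\{\max_{b'\ne b}\bigl(\Theta_{b'}(x)+t_{b'}\bigr)-\bigl(b\cdot x-r(b)\bigr)\Bigr\},
\]
and then invokes Lemma~\ref{convexPL} literally, with the compact fundamental domain $D_0^b$ in the role of $D$, to conclude that the right-hand side is convex piecewise-linear in $(t_{b'})$ with finitely many slopes.

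The one point to flag is your appeal to Lemma~\ref{convexPL}. That lemma is tailored to the paper's formulation (a minimum over a compact polytope of a convex PL function with finitely many slopes); in your dual picture there is no evident compact $D$ over which one is minimizing, so the lemma does not apply as stated. Your substitute argument---that the quadratic growth of $r$ forces only affine dependencies of bounded diameter, hence finitely many modulo $q(\Lambda)$, to be relevant---is correct and is what you actually need, but it requires its own short proof rather than a citation of Lemma~\ref{convexPL}. Your full-dimensionality argument (strict convexity of $r$ at $s=0$ absorbs small perturbations) and your compactness argument (placing $b_1$ inside a simplex with vertices in $b_0+q(\Lambda)$ bounds $s_{b_0}-s_{b_1}$) are both sound and parallel the paper's use of the section $\Xi$ with $\varphi(\Xi)=0$.
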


\begin{proof}
We can assume $L=L(Q,q_{\R}(r) )$ for some $r\in \R^{*}$.
The former is already shown. Thus we show the latter.
The image $\operatorname{Im}\varphi$ is equal to the subset
\[
\left\{ (t_{b})_{b\in B} \ \middle|\ 
\varphi^{b}\left( \max_{b'\in B}\{ \Theta_{b'}(x)+t_{b'} \}\right) =t_{b} 
\ \forall b\in B
 \right\}. \]

The condition 
$\displaystyle \varphi^{b}\left( \max_{b'\in B}\{ \Theta_{b'}(x)+t_{b'} \}\right) =t_{b}$
is equivalent to 
\begin{align*}
t_{b}&\geq \min_{x\in \R^{n}}\left\{ \max_{b'\in B\setminus \{ b\}}
\left( \Theta_{b'}(x)+t_{b'}\right)-\Theta_{b}(x) 
 \right\}\\
&=\min_{x\in D_{0}^{b}}\left\{ \max_{b'\in B\setminus \{ b\}}
\left( \Theta_{b'}(x)+t_{b'}\right)-\Theta_{b}(x) 
 \right\} \text{\ \ (by the quasi-periodicity)} \\
&=\min_{x\in D_{0}^{b}}\left\{ \max_{b'\in B\setminus \{ b\}}
\left( \Theta_{b'}(x)+t_{b'}\right)-(b\cdot x -r(b)) 
 \right\},
\end{align*}
where $D_{0}^{b}$ is the region introduced in the proof of Proposition \ref{prop}.

By Lemma \ref{convexPL},
the right-hand side of the above inequality is a convex piecewise-linear function
of $t_{b'},\ b'\in B\setminus \{ b\}$ with only finitely many slopes.
Thus $\operatorname{Im}\varphi $ is a convex polyhedron.

Let $\Xi := \max_{b\in B}\left\{ \Theta_{b}(x)\right\}$ and 
let $\widetilde{\Lambda} \subset \R^{n}$ be the lattice $q_{\R}^{-1}((\Z^{2})^{*})$, 
which is an overlattice of $\Lambda$.
We can check that
$\Xi$ satisfies the quasi-periodicity
\[ \Xi (x+\eta )=
\Xi (x)+Q_{\R}(\eta ,x)+\frac{1}{2}Q_{\R}(\eta, \eta )+q_{\R }(r)
,\ \eta\in \widetilde{\Lambda} ,\ x\in \R^{n} \]
and we get that
the set of slopes of $\Xi$ is exactly $(\Z^{n})^{*}$.
We can check that $\varphi (\Xi)=0$,
thus the image $\operatorname{Im}\varphi$ contains the origin $(0,\ldots ,0)$.
Moreover, 
the image $\operatorname{Im}\varphi$ contains
$|B|$ points
$(\varepsilon ,0,\ldots , 0),\ldots ,(0,\ldots ,0, \varepsilon )$
for a sufficiently small positive number $\varepsilon$,
for example, 
\[ \varepsilon=
\min_{b\in B} \left\{ \min_{x\in \R^{n}} \left\{ 
\max_{b'\in B\setminus \{b\}} \left\{ 
\Theta_{b'}(x)
\right\} -\Theta_{b}(x)
\right\}\right\}. \]
The convexity of $\operatorname{Im}\varphi$ tells us that 
$\operatorname{Im}\varphi$ includes an $|B|$-simplex.
Therefore the image $\operatorname{Im}\varphi$ is an $|B|$-dimensional convex polyhedron.
\end{proof}

\begin{rmk}
The convexity of $\varphi (\Gamma (X,L))$ is clear by Proposition \ref{prop} and the fact that
the set (\ref{Legendre}) is  convex.
\end{rmk}
\begin{rmk}
Mikhalkin-Zharkov \cite[Remark 5.5]{MZ} 
states (without proof) that the dimension of $\Gamma (X,L)$
is given by the degree of the polarization. 
A novel observation in this paper is that $\Gamma (X,L)$ is a convex polyhedron.
\end{rmk}

\subsection{The space of theta function: positive semidefinite case}
Next, we consider a line bundle $L(Q,\alpha )$ on the tropical torus $X=\R^{n}/\Lambda$ 
with $Q$ positive semidefinite.
Then $\operatorname{Ker}(q_{\R})$ is not necessarily $0$ and
it is both $\Lambda$-rational and $\Z$-rational
because of the symmetry of $Q\in \Lambda^{*}\otimes (\Z^{n})^{*}$.
Here a subspace of $\R^{n}$ is said to be
$\Lambda$-rational (resp. $\Z$-rational) if
 it is generated by elements of $\Lambda$ (resp. $\Z^{n}$).

First, we suppose $\alpha \in \operatorname{Im}(q_{\R})+(\Z^{n})^{*}$.
Then we can describe $\alpha =q_{\R}(r)+\gamma $ 
for some $r\in \R^{n}$ and $\gamma\in (\Z^{n})^{*}$.
Since $L(Q,\alpha )\cong L(Q, q_{\R}(r))$,
we can assume that $\alpha \in\operatorname{Im}(q_{\R})$.

Recall the quasi-periodicity of theta functions for $L(Q,\alpha )$,
\[ \Theta (x+\lambda )=\Theta (x)+Q_{\R}(\lambda ,x)
+\frac{1}{2}Q_{\R}(\lambda ,\lambda ) +\alpha  (\lambda ). \] 
By the quasi-periodicity,
theta functions are $\operatorname{Ker}(q)$-periodic since $\alpha \in\operatorname{Im}(q_{\R})$.
Theta functions are convex and periodic in the direction $\operatorname{Ker}(q_{\R})$,
thus they are constant along $\operatorname{Ker}(q_{\R})$, that is,
\[ \Theta (x+\lambda '' )=\Theta (x)
\ \text{for all }\lambda ''\in \operatorname{Ker}(q_{\R}),\ x\in \R^{n}. \]
Thus theta functions  descends to functions on $\R^{n}/\operatorname{Ker}(q_{\R})$. 
The quotient space $\R^{n}/\operatorname{Ker}(q_{\R})$ contains the two natural lattices,
$\Z^{n}/(\operatorname{Ker}(q_{\R})\cap \Z^{n})$ induced by $\Z^{n}$ and
$\Lambda/\operatorname{Ker}(q)$ induced by $\Lambda$.

We denote a descendant theta function by $\overline{\Theta }$.
This  
satisfies the following quasi-periodicity
for all $x\in \R^{n}/\operatorname{Ker}(q_{\R})$ and $\lambda\in \Lambda /\operatorname{Ker}(q)$;
\[ \overline{\Theta} (x+\lambda )=\overline{\Theta} (x)+\overline{Q_{\R}}(\lambda ,x)+
\frac{1}{2}\overline{Q_{\R}}(\lambda ,\lambda)+\overline{\alpha } (\lambda ). \]
Here $\overline{Q_{\R}}\in (\R^{n}/\operatorname{Ker}(q_{\R}))^{*}\otimes
(\R^{n}/\operatorname{Ker}(q_{\R}))^{*}$
is the symmetric form induced by $Q$ and
$\overline{\alpha }$ is the linear function on $\R^{n}/\operatorname{Ker}(q_{\R})$
induced by $\alpha $.

By the above argument, the tropical module $\Gamma (X, L(Q,\alpha ))$ is identified with
\begin{eqnarray*}
\left\{ \overline{\Theta} \colon \R^{n}/\operatorname{Ker}(q_{\R})\to \R \ \middle|\ 
\begin{aligned}
&\overline{\Theta} (x+\lambda )=\overline{\Theta} (x)+\overline{Q_{\R}}(\lambda ,x)+
\frac{1}{2}\overline{Q_{\R}}(\lambda, \lambda)+ \overline{\alpha } (\lambda )\\
~&\text{for all }\lambda \in \Lambda /\operatorname{Ker}(q) ,
\ x\in \R^{n}/\operatorname{Ker}(q_{\R}).\\
~&\hspace{0mm} \text{$\overline{\Theta} $ is a
$ \left( \Z^{n}/(\operatorname{Ker}(q_{\R})\cap \Z^{n}) \right) ^{*}$-regular function.}
\end{aligned} \right\} \cup \{ -\infty \}.
\end{eqnarray*}
Here a $\left( \Z^{n}/(\operatorname{Ker}(q_{\R})\cap \Z^{n}) \right) ^{*}$-regular function
means a convex piecewise-linear function whose slopes lie in 
$\left( \Z^{n}/(\operatorname{Ker}(q_{\R})\cap \Z^{n}) \right) ^{*}$.

Under the identification
\begin{align*}
 \left( \R^{n}/\operatorname{Ker}(q_{\R})\right) ^{*}
&=\left\{ a\in (\R^{n})^{*}
\ \middle|\ 
a(\operatorname{Ker}(q_{\R}))=0 \right\} \\
 \left( \Z^{n}/(\operatorname{Ker}(q_{\R})\cap \Z^{n}) \right) ^{*}
&=\left\{ a\in (\Z^{n})^{*}
\ \middle|\ 
a(\operatorname{Ker}(q_{\R})\cap \Z^{n})=0 \right\}, 
\end{align*}
we have
\begin{align*}
 \left( \R^{n}/\operatorname{Ker}(q_{\R})\right) ^{*}
&=\operatorname{Im}(q_{\R}) \\
 \left( \Z^{n}/(\operatorname{Ker}(q_{\R})\cap \Z^{n}) \right) ^{*}
&=\operatorname{Im}(q_{\R})\cap (\Z^{n})^{*}\supset  \operatorname{Im}(q)
\end{align*}
by the symmetry of $Q$.


In this situation, we can apply the result of the positive definite case.  
Let 
$\overline{q}\colon \Lambda /\operatorname{Ker}(q) \to
\operatorname{Im}(q_{\R})\cap (\Z^{n})^{*}$
be the linear map given by $q$.
Then we can see that $\Gamma (X,L)$ is generated by 
$l:=\left| \operatorname{Cok}(\overline{q})\right|
=\left| \text{the torsion part of }\operatorname{Cok}(q)\right|$
elements  
and that $\Gamma (X,L)$ naturally embeds into $\T^{l}$.
Moreover, 
$\Gamma (X,L)$ has the structure of a pure $l$ dimensional polyhedron in $\T^{l}$,
and its projectivization is a compact polyhedron.

Next, we suppose that $\alpha \notin \operatorname{Im}(q_{\R})+(\Z^{n})^{*}$.
If $\alpha $ is integral on $\operatorname{Ker}(q_{\R})\cap \Z^{n}$,
that is, $\alpha (\operatorname{Ker}(q_{\R})\cap \Z^{n})\subset (\Z^{n})^{*}$,
we can assume that $\alpha  (\operatorname{Ker}(q_{\R})\cap \Z^{n})=0$
by translating $\alpha $ by an element of $(\Z^{n})^{*}$.
Then we can consider $\alpha $ as a linear map from $\R^{n}/\operatorname{Ker}(q_{\R})$ to $\R$,
that is, $\alpha  \in \operatorname{Im}(q_{\R})$. This is a contradiction.
Therefore,  $\alpha $ is not integral even on $\operatorname{Ker}(q_{\R})\cap \Z^{n}$.

By the quasi-periodicity, theta functions for $L(Q,\alpha )$ must satisfy
\[ \Theta (x+\lambda '' )=\Theta (x)+\alpha  (\lambda '')
\ \text{for all }\lambda ''\in \operatorname{Ker}(q_{\R})\cap \Lambda ,\ x\in \R^{n}. \]
Thus theta functions are affine linear on $\operatorname{Ker}(q_{\R})$ with slope $\alpha $.
Since $\alpha $ is not integral on $\operatorname{Ker}(q_{\R})\cap\Z^{n}$,
theta functions cannot be regular.
Thus $\Gamma (X,L)=\{ -\infty\}$.

Consequently we get the following theorem.
\begin{thm}\label{THM}
Let $X$ be a tropical torus and 
$L=L(Q,\alpha )$ be a tropical line bundle on $X$ 
such that $Q=c_{1}(L)$ is positive semidefinite.
\begin{itemize}
\item[(1)] If $\alpha $ lies in $\operatorname{Im}(q_{\R})+(\Z^{n})^{*}$,
then $\Gamma (X,L)$ is generated by 
$l=\left| \text{the torsion part of }\operatorname{Cok}(q)\right|$ elements 
as the $\T$-module and $\Gamma (X,L)$ embeds into $\T^{l}$ as in Definition \ref{inj}.
Moreover, 
$\Gamma (X,L)$ is identified with an $l$-dimensional convex polyhedron in $\T^{l}$
and its projectivization is identified with a compact polyhedron in $\R^{n}$.
\item[(2)] If $\alpha $ is not in $\operatorname{Im}(q_{\R})+(\Z^{n})^{*}$,
then $\Gamma (X,L)=\{ -\infty \} .$
\end{itemize}
\end{thm}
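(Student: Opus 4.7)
My plan is to reduce the positive semidefinite case to the positive definite case of Theorem \ref{thm1} by descending theta functions to the quotient $\R^n/\Ker(q_\R)$, following the discussion immediately preceding the statement.

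For part (1), I would first use Proposition \ref{LQA} to replace $\alpha$ by a representative in $\Im(q_\R)$ modulo $(\Z^n)^*$, so WLOG $\alpha = q_\R(r)$ for some $r$. The key observation is that since $\alpha$ vanishes on $\Ker(q_\R)$, the quasi-periodicity formula forces any theta function $\Theta$ to be $(\Ker(q_\R) \cap \Lambda)$-periodic. Combined with convexity of $\Theta$, this periodicity in a linear subspace direction forces $\Theta$ to be constant along all of $\Ker(q_\R)$ (a convex function on $\R$ that is periodic must be constant). Hence $\Theta$ descends to a function $\overline{\Theta}$ on $\R^n/\Ker(q_\R)$ satisfying the quasi-periodicity with respect to $\overline{Q_\R}$, which is now positive definite, and with respect to the lattice $\Lambda/\Ker(q)$. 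The integrality structure to use on the quotient is the lattice $\Im(q_\R) \cap (\Z^n)^*$ of allowed slopes, under the identification $(\R^n/\Ker(q_\R))^* = \Im(q_\R)$ that follows from the symmetry of $Q$.

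This reduction gives a bijection between $\Gamma(X,L)$ and the space of theta functions on the quotient torus $\R^n/\Ker(q_\R)$ modulo $\Lambda/\Ker(q)$ with respect to the (now positive definite) form $\overline{Q_\R}$. Applying Theorem \ref{thm1} to this quotient yields the generation by $l = |\Cok(\overline{q})|$ elements, the embedding into $\T^l$, the convex polyhedron structure of dimension $l$, and the compact polyhedral structure of the projectivization. The only subtlety to check carefully is that $|\Cok(\overline{q})|$ equals $|{\rm torsion}(\Cok(q))|$, which follows from the short exact sequence $0 \to \Im(q_\R) \cap (\Z^n)^*/\Im(q) \to (\Z^n)^*/\Im(q) \to (\Z^n)^*/(\Im(q_\R)\cap(\Z^n)^*) \to 0$, where the last term is torsion-free and the first is precisely $\Cok(\overline{q})$.

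For part (2), the plan is a direct contradiction argument. If $\alpha \notin \Im(q_\R) + (\Z^n)^*$, then $\alpha$ cannot be made to vanish on $\Ker(q_\R) \cap \Z^n$ by any integral translation: if $\alpha|_{\Ker(q_\R)\cap\Z^n}$ were integral, we could subtract an element of $(\Z^n)^*$ to make it zero, forcing $\alpha$ to factor through $\R^n/\Ker(q_\R)$, hence to lie in $\Im(q_\R)$. So $\alpha$ has non-integral slope on $\Ker(q_\R)\cap\Z^n$. Then the quasi-periodicity along $\lambda'' \in \Ker(q_\R)\cap\Lambda$ reduces to $\Theta(x+\lambda'') = \Theta(x) + \alpha(\lambda'')$; combined with convexity along $\Ker(q_\R)$, this forces $\Theta$ to be exactly affine linear along $\Ker(q_\R)$ with slope $\alpha|_{\Ker(q_\R)}$. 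But a regular (i.e., $\Z$-piecewise-affine) function on $\R^n$ has slopes in $(\Z^n)^*$, so its restriction to $\Ker(q_\R)$ has integral slope on $\Ker(q_\R)\cap\Z^n$, contradicting the non-integrality of $\alpha$. Hence no theta function exists.

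The main obstacle I anticipate is the bookkeeping in the first step: verifying that the passage to the quotient $\R^n/\Ker(q_\R)$ preserves the integrality needed so that Theorem \ref{thm1} applies with the correct lattice of slopes, and correctly identifying the quotient's "Picard data" $(\overline{Q}, \overline{\alpha})$. The convexity-plus-periodicity-implies-constant lemma along $\Ker(q_\R)$ is conceptually the heart of the reduction and deserves explicit mention, but is elementary once stated.
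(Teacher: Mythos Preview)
Your proposal is correct and follows essentially the same approach as the paper's argument in the section preceding the theorem statement: reduce to $\alpha\in\operatorname{Im}(q_{\R})$ via Proposition~\ref{LQA}, use convexity plus $\operatorname{Ker}(q)$-periodicity to descend to the quotient $\R^n/\operatorname{Ker}(q_{\R})$ where $\overline{Q}$ is positive definite, then invoke Theorem~\ref{thm1}; for part (2), derive non-integrality of $\alpha$ on $\operatorname{Ker}(q_{\R})\cap\Z^n$ and contradict regularity. Your explicit short exact sequence justifying $|\operatorname{Cok}(\overline{q})|=|\text{torsion of }\operatorname{Cok}(q)|$ is slightly more detailed than the paper, which simply asserts this identification.
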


\subsection{The rank of divisor}
In this section, we show that $h^{0}(X,D)$ introduced in Cartwright \cite{DC2}
 is equal to the topological dimension of $\Gamma (X,\mathcal{O}(D))$.

\begin{dfn}
Let $X$ be a tropical manifold. 
A divisor $D$ on $X$ is {\it effective} if the divisor $D$ is given by a global regular section of
$\mathcal{O}(D)$.
For a divisor $D$ on $X$,
we denote the set of effective divisor linearly equivalent to $D$ by $|D|$.
\end{dfn}

\begin{dfn}[Cartwright \cite{DC2}, Definition 3.1]
Let $X$ be a tropical manifold and $D$ be a divisor on $X$.
We define $h^{0}(X,D)$ as
\[ h^{0}(X,D):=\min\left\{ k\in \Z_{\geq 0} \middle| 
\begin{aligned}
&\text{there exist $k$ points } p_{1},\ldots ,p_{k}, \text{ such that } \\
~&\hspace{0mm}\text{there is no divisor }
E\in |D| \text{ which passes all }p_{i}
\end{aligned}
\right\} .\]
If there is no such $k$, then we define $h^{0}(X,D)$ as $\infty$.
\end{dfn}

\begin{rmk}
If $|D|=\emptyset$, then $h^{0}(X,D)=0$ holds trivially.
\end{rmk}

\begin{rmk}
For a tropical curve $C$, we can check that $h^{0}(C,D)=r(D)+1$.
\end{rmk}

Let $X=\R^{n}/\Lambda $ be a topical torus and let $D$ be a divisor on $X$.
By Proposition \ref{LQA}, $\mathcal{O}(D)$ is isomorphic to $L(Q,\alpha )$
for $Q=c_{1}(\mathcal{O}(D))$ and some $\alpha\in (\R^{n})^{*}$.
Let $q\colon \Lambda \to (\Z^{n})^{*}$ be the linear map given by $Q$ and
let $B=\{ b_{1},\ldots , b_{l} \}\subset (\Z^{n})^{*}$ be a complete set of representatives of
the torsion part of $(\Z^{n})^{*}/q(\Lambda )$.
We take the basis $\{ \Theta_{b} \}_{b\in B}$ of $\Gamma (X,\mathcal{O}(D))$
defined in Section 3.3. 

\begin{thm}\label{rD}
The value $h^{0}(X,D)$ coincides with the topological dimension of 
$\Gamma (X,\mathcal{O}(D))$ as a convex polyhedron.
\end{thm}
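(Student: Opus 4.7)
The plan is to invoke Theorem \ref{THM}. If $\Gamma(X,\mathcal{O}(D)) = \{-\infty\}$, then $|D|=\emptyset$ and $h^{0}(X,D)=0$ matches the trivial dimension. Otherwise $\Gamma(X,\mathcal{O}(D))$ is an $l$-dimensional convex polyhedron generated by the theta functions $\{\Theta_{b}\}_{b\in B}$ of Section 3.3, and every section can be written as $\Theta = \max_{b\in B}\{\Theta_{b} + s_{b}\}$ with coordinates $(s_{b}) = \varphi(\Theta)$. It then suffices to prove $h^{0}(X,D)=l$ by a double inequality.

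For the upper bound $h^{0}(X,D)\leq l$, I would choose $l$ points $p_{1},\ldots ,p_{l}\in X$ lying in the complement of the supports of $\operatorname{div}(\Theta_{b})$ for all $b\in B$, chosen so that the vectors $(\Theta_{b}(p_{i}))_{b\in B}\in \R^{l}$ sit in tropically generic position. Under these assumptions each generator is smooth at every $p_{i}$, so $\operatorname{div}(\Theta)$ passes through $p_{i}$ if and only if the tropical linear form $\max_{b}\{\Theta_{b}(p_{i})+s_{b}\}$ attains its maximum at two or more indices, i.e.\ $(s_{b})$ lies on a certain tropical hyperplane. A tropical linear algebra argument, analogous to the classical fact that $l$ generic hyperplanes in $\p^{l-1}$ have empty intersection, then shows that for suitably chosen $p_{i}$ no class $[\Theta]\in \p(\Gamma)$ satisfies all $l$ tropical hyperplane conditions simultaneously.

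For the lower bound $h^{0}(X,D)\geq l$, I would show that for any collection of $l-1$ points $p_{1},\ldots ,p_{l-1}\in X$, the set $Z:=\bigcap_{i=1}^{l-1}Z_{p_{i}}\subset \p(\Gamma)$ is non-empty, where $Z_{p_{i}}$ is the set of classes whose divisor passes through $p_{i}$. Each $Z_{p_{i}}$ is a non-empty closed polyhedral subset of the compact $(l-1)$-dimensional polyhedron $\p(\Gamma)$ of codimension at most one: away from supports of generator divisors it is a tropical hyperplane section, while near such supports $Z_{p_{i}}$ contains full-dimensional cells where some $\Theta_{b}$ is the winner and already passes through $p_{i}$. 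A codimension count, performed by induction on $i$, yields $\dim Z \geq 0$, so $Z$ is non-empty and produces a divisor in $|D|$ passing through all $l-1$ of the chosen points.

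The hard part will be establishing the dimension and genericity claims rigorously in the polyhedral setting. The upper bound requires producing genuinely tropically-generic configurations of $l$ points and verifying that no combinatorial degeneracies in the polyhedron $\varphi(\Gamma)$ obstruct the general-position argument; the lower bound requires a careful induction showing that each successive intersection with $Z_{p_{i}}$ drops the dimension by at most one, even when $p_{i}$ lies on the divisor of some $\Theta_{b}$ and $Z_{p_{i}}$ contains full-dimensional regions of $\p(\Gamma)$.
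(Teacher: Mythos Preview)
Your overall strategy---a double inequality via point-incidence conditions---matches the paper's, and your upper bound is essentially the paper's argument in outline. The paper also chooses $l$ points off $H=\bigcup_{b}\operatorname{supp}(\operatorname{div}\Theta_{b})$ and then, rather than invoking an abstract ``generic tropical hyperplanes have empty intersection'' principle, makes the genericity explicit: if some $\Theta$ were singular at all $l$ points it builds a multigraph on the vertex set $B$ with one edge per point, finds a cycle, and extracts from that cycle a nontrivial rational function $l_{i_{1},\ldots ,i_{k}}$ on $(\R^{n})^{k}$ that must vanish at the chosen points. The zero loci of these finitely many functions have codimension~$1$, so a generic $l$-tuple avoids them all. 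This is the content of your ``tropical linear algebra argument'', carried out by hand.

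The lower bound is where your proposal has a genuine gap. A codimension count inside a compact polyhedron does \emph{not} by itself guarantee non-empty intersection: three segments of codimension~$1$ in a triangle can have empty common intersection, and nothing in your inductive scheme rules out $Z'\cap Z_{p_{i}}=\emptyset$ at some step even when $\dim Z'\geq 1$. Tropical hyperplanes do enjoy special intersection behaviour, but the standard way to prove that any $l-1$ of them in $\p(\T^{l})$ meet is precisely tropical Cramer's rule---and that is exactly what the paper does. For arbitrary $p_{1},\ldots ,p_{l-1}$ (with lifts $q_{i}\in\R^{n}$) the paper simply writes down the tropical Vandermonde
\[
V_{D}(x;q_{1},\ldots ,q_{l-1})
=\operatorname{det}_{\text{trop}}\bigl(\Theta_{b_{j}}(q_{i})\bigr)
=\max_{\sigma\in\mathfrak{S}_{l}}\Bigl\{\Theta_{b_{\sigma(1)}}(x)+\sum_{i=2}^{l}\Theta_{b_{\sigma(i)}}(q_{i-1})\Bigr\},
\]
which is manifestly a tropical linear combination of the $\Theta_{b}$ and whose divisor passes through every $p_{i}$ because at $x=q_{i}$ two permutations differing by a transposition tie for the maximum. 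This single explicit formula replaces your entire inductive dimension argument and works for \emph{all} $(l-1)$-tuples of points with no genericity hypothesis, which is exactly what the definition of $h^{0}$ requires.
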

\begin{proof}
When the topological dimension of $\Gamma (X,\mathcal{O}(D))$ as
a convex polyhedron is $0$ or $1$, 
it is clear that $h^{0}(X,D)$ coincides with this dimension.

We assume that $\Gamma (X,\mathcal{O}(D))\neq -\infty$ and $l\geq 2$. 
In order to see that $h^{0}(X,D)\geq l$,
we take $l-1$-points $p_{1},\ldots ,p_{l-1}\in X$ and choose $q_{1},\ldots ,q_{l-1}\in \R^{n}$ such that
$p_{i}=q_{i}+\Lambda$.

The tropical determinant $\operatorname{det}_{\text{trop}}(t_{i,j})$
 of a square matrix $(t_{ij})_{1\leq i,j \leq l}$ whose entries are in $\T$ is defined as 
$\max_{\sigma\in \mathfrak{S}_{l}}\{ \sum_{i=1}^{n}a_{\sigma (i)i} \}$,
where $\mathfrak{S}_{l}$ is the symmetric group of $\{ 1,\ldots ,l \}$.

Now we define a theta function for $\mathcal{O}(D)$ by the Vandermonde determinant
\begin{align*}
V_{D}(x; q_{1},\ldots ,q_{l-1})&:=
\operatorname{det}_{\text{trop}}
\left( \begin{array}{cccc} 
\Theta _{b_{1}}(x)&\Theta _{b_{2}}(x)&\cdots &\Theta _{b_{l}}(x)\\ 
\Theta _{b_{1}}(q_{1})&\Theta _{b_{2}}(q_{1})&\cdots &\Theta _{b_{l}}(q_{1})\\
\vdots&\vdots&\ddots&\vdots\\
\Theta _{b_{1}}(q_{l-1})&\Theta _{b_{2}}(q_{l-1})&\cdots &\Theta _{b_{l}}(q_{l-1})
\end{array}\right)\\
&=\max_{\sigma\in \mathfrak{S}_{l}}\left\{ \Theta_{b_{\sigma (1)}}(x)+
\sum_{i=2}^{l}\Theta_{b_{\sigma (i)}}(q_{i-1}) \right\}.
\end{align*}
We can see that the divisor on $X$ given by $V_{D}(x; q_{1},\ldots ,q_{l-1})$
is linearly equivalent to $D$ and passes the $l-1$ points $p_{1},\ldots, p_{l-1}$.
Thus we get $h^{0}(X,D)\geq l$.

Next, we suppose $h^{0}(X,D)\geq l+1$.
Then for any $l$ points $p_{1},\ldots ,p_{l}\in X$,
we can take a theta function $\Theta := \max_{b\in B}\{  \Theta_{b}(x)+t_{b}\}$
which is non-linear at a representative $q_{k}\in \R^{n}$ of $p_{k}$ for all $k=1,\ldots ,l$.
We set $H:= \bigcup_{b\in B}\operatorname{supp}(\operatorname{div} \Theta_{b}) \subset \R^{n}$.
We can assume that $q_{1},\ldots ,q_{l}$ are not in $H$.

Now we define a multigraph whose vertex set is $B$.
For each $k=1,\ldots ,l$, we can choose two distinct elements $b_{i_{k}}, b_{j_{k}}\in B$
such that $\Theta_{b_{i_{k}}}(q_{k})+t_{b_{i_{k}}}=\Theta_{b_{j_{k}}}(q_{k})+t_{b_{j_{k}}}$
since $\Theta$ is singular at $q_{k}$.
We connect $b_{i_{k}}$ and $b_{j_{k}}$ by an edge for each $k=1,\ldots ,l$.
Then we have a multigraph $\Gamma$ with $l$ vertices and $l$ edges.
Such a $\Gamma$ necessarily has a cycle.

Let $b_{1},\ldots ,b_{k}$ make a cycle. 
We can assume that $b_{i}\neq b_{j}$ for $i\neq j$.
Then we get
\begin{align*}
\Theta_{b_{1}}(q_{i_{1}})+t_{b_{1}}&=\Theta_{b_{2}}(q_{i_{1}})+t_{b_{2}}\\
\Theta_{b_{2}}(q_{i_{2}})+t_{b_{2}}&=\Theta_{b_{3}}(q_{i_{2}})+t_{b_{3}}\\
~&\vdots \\
\Theta_{b_{k-1}}(q_{i_{k-1}})+t_{b_{k-1}}&=\Theta_{b_{k}}(q_{i_{k-1}})+t_{b_{k}}\\
\Theta_{b_{k}}(q_{i_{k}})+t_{b_{k}}&=\Theta_{b_{1}}(q_{i_{k}})+t_{b_{1}}
\end{align*}
for some $q_{i_{1}},\ldots q_{i_{k}}$.
By combining these equations, we get
\[
\left( \Theta_{b_{1}}(q_{i_{1}})-\Theta_{b_{2}}(q_{i_{1}})\right)
+\cdots +\left( \Theta_{b_{k-1}}(q_{i_{k-1}})-\Theta_{b_{k}}(q_{i_{k-1}})\right)
+\left( \Theta_{b_{k}}(q_{i_{k}})-\Theta_{b_{1}}(q_{i_{k}})\right)=0.
\]
More generally, we define the functions $l_{i_{1},\ldots ,i_{k}}$ for $k>1$ and
 a subset $\{ i_{1},\ldots ,i_{k}\}\subset \{ 1,\ldots ,l \}$ as follows:
\[ l_{i_{1},\ldots ,i_{k}}(x_{1},\ldots x_{k})
:=\sum_{j=1}^{k}\left(
\Theta_{b_{i_{j}}}(x_{j})-\Theta_{b_{i_{j+1}}}(x_{j	})
\right),
 \]
where $i_{k+1}:=i_{1}$.
These functions are rational functions on $(\R^{n})^{k}$.
To summarize the above argument, we obtain the following lemma.

\begin{lem}\label{l0}
Suppose $h^{0}(X,D)\geq l+1$. 
Then for any $l$ points $q_{1}\ldots ,q_{l}\in \R^{n}\setminus H$,
there exist an ordered sequence $\{ i_{1},\ldots ,i_{k}\}$ of elements in $\{ 1,\ldots ,l \}$
 and $\{ q_{j_{1}}, \ldots, q_{j_{k}}\}\subset \{ q_{1}\ldots ,q_{l} \}$
such that
$ l_{i_{1},\ldots ,i_{k}}(q_{j_{1}},\ldots q_{j_{k}})=0 $.
\end{lem}

We define subsets $H_{1}$, $H_{k}\subset (\R^{n})^{l}$ , $k=2,\ldots ,l$ as below.
\[ 
H_{1}:=\left\{ ( x_{1},\ldots ,x_{l} )\in (\R^{n})^{l} \middle| x_{i} \in H \text{ for some }i \right\}.
\]
\[
H_{k}:=\left\{ ( x_{1},\ldots ,x_{l} )\in (\R^{n})^{l} \middle| 
\begin{aligned}
\text{There exist an ordered sequence }\{ i_{1},\ldots ,i_{k}\}\text{ of elements in } \{ 1,\ldots ,l \}\\
~&\hspace{-87mm}\text{and a subset }\{ x_{j_{1}}, \ldots ,x_{j_{k}} \}\subset \{ x_{1},\ldots ,x_{l} \},\\ 
~&\hspace{-87mm} \text{such that } l_{i_{1},\ldots ,i_{k}}(x_{j_{1}},\ldots x_{j_{k}})=0
\end{aligned}\ \ \ \ \ \ \ \ \ \ \ \ \ \ \ \ 
\right\}.
\]

It is clear that the dimension of $H_{1}$ is $ln-1$.
The dimension of $H_{k}$,for $k=2,\ldots ,l$ is also $ln-1$
since functions $l_{i_{1},\ldots ,i_{k}}$ are rational functions on $(\R^{n})^{l}$ with non-zero slopes.
Thus $\R^{ln}\setminus \cup_{k=1}^{l}H_{k}$ is not empty and 
we can take $l$ points $q_{1},\ldots ,q_{l}$ with $(q_{1},\ldots ,q_{l})\in (\R^{n})^{l}\setminus \cup_{k=1}^{l}H_{k}$.
By definition, for any subset $\{ q_{j_{1}},\ldots q_{j_{k}} \}$ of these $l$ points 
and any ordered sequence $\{ i_{1},\ldots ,i_{k}\}$ of $\{ 1,\ldots ,l \}$,
$ l_{i_{1},\ldots ,i_{k}}(q_{j_{1}},\ldots q_{j_{k}})\neq 0$.
This contradicts Lemma \ref{l0}.
Thus we get $h^{0}(X,D)< l+1$. 
\end{proof}



\section{Riemann-Roch inequality for tropical abelian varieties}\label{intersection number}

\subsection{Intersection number of divisors on a tropical torus}
In this section, we  briefly define
the intersection number of $n$ divisors on an $n$-dimensional tropical torus.
For the details, we refer the reader to 
Allermann-Rau \cite{AR} and  Mikhalkin \cite{M}.

Let $C_{1},C_{2},\ldots , C_{n}$ be divisors in $\R^{n}$.
We take rational polyhedral subdivisions of $C_{1}, \ldots , C_{n}$ such that
the intersection $\bigcap_{i=1}^{n}C_{i}$ is a rational polyhedral subcomplex of each $C_{i}$.
We define $S$ to be the set of vertices of $\bigcap_{i=1}^{n}C_{i}$.

At first, we define the intersection multiplicity at a point $p\in S$.
For each $i=1, \ldots ,n$, we can take an open neighborhood $U_{i}$ of $p$ in $C_{i}$
which is contained in the open star of $p$, so that
$p$ is the only vertex of $\bigcap_{i=1}^{n}U_{i}$.
Let $U_{i}(v_{i})$ be the translation of $U_{i}$ by a vector $v_{i}\in \R^{n}$.
For generic sufficiently small $v_{i}$,
$\bigcap_{i=1}^{n}U_{i}(v_{i})=\emptyset$ or
$U_{1}(v_{1}),\ldots ,U_{n}(v_{n})$ intersect transversally and 
none of intersection points lie in $(n-2)$-dimensional cells of $U_{i}(v_{i})$.

If $\bigcap_{i=1}^{n}U_{i}(v_{i})=\emptyset$, we define the intersection multiplicity at $p$ to be $0$.
If $\bigcap_{i=1}^{n}U_{i}(v_{i})\neq\emptyset$,
for every intersection point $q\in \bigcap_{i=1}^{n}U_{i}(v_{i})$,
there exist (n-1)-dimensional cells $E_{i}$ of $U_{i}(v_{i})$
such that $\{q\}=\bigcap_{i=1}^{n} E_{i}$.
Let $w_{i}$ be the weight of $E_{i}$ and $v_{i}$ be a primitive vector orthogonal to $E_{i}$.
We define the intersection multiplicity at $q$ as
$\operatorname{mult}(q):=w_{1}w_{2}\cdots w_{n}\left| \operatorname{det}(v_{1},v_{2},\ldots ,v_{n})\right|$
and we define the intersection multiplicity at a point $p\in S$ to be 
$\operatorname{mult}(p):=\sum_{q\in \bigcap_{i=1}^{n}U_{i}(v_{i})}\operatorname{mult}(q)$.
We can check that this definition is independent of the choice of $U_{1},\ldots ,U_{n}$ and 
generic sufficiently small vectors $v_{i}$ because of the balancing condition of divisors.
For more details, we refer the reader to \cite[Construction 6.4]{AR} or \cite[Definition 4.4]{M}.

Now we can define
the intersection number $C_{1}.C_{2}.\ldots . C_{n}$ to be
$\sum_{p\in S}\operatorname{mult}(p)$ if the set $S$ is finite.
This definition is independent of the subdivision.
If we take finer subdivisions of $C_{1}, \ldots ,C_{n}$,
a new point of $S$ is on the interior of a cell of $\bigcap_{i=1}^{n}C_{i}$
and its intersection multiplicity is $0$.

Let $X$ be an $n$-dimensional tropical torus
and $D_{1},D_{2},\ldots , D_{n}$ be divisors on $X$.
We take rational polyhedral subdivisions of $D_{1}, \ldots , D_{n}$ as above and
define $S$ to be the set of vertices of $\bigcap_{i=1}^{n}D_{i}$, which is finite set because of 
the compactness of $X$.
Since $X$ locally looks like $\R^{n}$,
we can define the intersection multiplicities of vertices of $S$  in the same way as above.
The intersection number of $D_{1},D_{2},\ldots , D_{n}$ is defined to be 
the sum of the intersection multiplicities of vertices of S.

It is clear that the intersection number is an $n$-linear form on $\operatorname{Div}(X)$.
Moreover, it is invariant under linear equivalence, that is,
\begin{prop}\cite[Lemma 8.3]{AR}
Let $D_{1}, \ldots D_{n-1}$ and $D, D'$ be a divisor on a tropical torus $X/\Lambda$.
If $D$ and $D'$ are linearly equivalent, then
$D_{1}.\ldots D_{n-1}. D=D_{1}.\ldots D_{n-1}. D'$.
\end{prop}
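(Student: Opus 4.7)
The plan is to reduce to principal divisors via multilinearity, then exploit the fact that iteratively intersecting $D_1,\ldots,D_{n-1}$ produces a compact tropical $1$-cycle on which a global rational function has principal divisor of degree zero.

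First, since the intersection number is $n$-linear on $\operatorname{Div}(X)$ (the paper notes this explicitly) and since $D-D'=\operatorname{div}(f)$ for some global rational function $f$ on $X$, the difference $D_1\cdots D_{n-1}\cdot D - D_1\cdots D_{n-1}\cdot D'$ equals $D_1\cdots D_{n-1}\cdot\operatorname{div}(f)$, so it suffices to prove this vanishes for every global rational $f$.

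Next, I would form the iterated intersection $C:=D_1\cdots D_{n-1}$. By the general theory of tropical intersection of Cartier divisors (Allermann-Rau), $C$ is a balanced weighted rational $1$-polyhedral complex on $X$, and compactness of $X$ makes $C$ a compact tropical curve (possibly disconnected, with weighted edges). The heart of the argument is a projection formula
\[ C\cdot\operatorname{div}(f)\;=\;\deg\bigl(\operatorname{div}(f|_C)\bigr), \]
where $f|_C$ denotes the restriction of the piecewise-linear function $f\colon X\to\R$ to the support of $C$. After a small generic perturbation of $f$ (or of the divisors), intersections occur transversely in relative interiors of edges of $C$ and top-dimensional cells of $\operatorname{div}(f)$; at each such crossing the determinantal intersection multiplicity defined in Section 4.1 reduces to the edge-weight of $C$ times the lattice-length slope jump of $f|_C$ at that point, which is exactly the weight of $\operatorname{div}(f|_C)$ there.

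Finally, $f|_C$ is a continuous single-valued piecewise-linear function on the compact tropical curve $C$, so $\deg\bigl(\operatorname{div}(f|_C)\bigr)=0$. This is the tropical analog of ``the sum of orders of zeros and poles of a rational function on a compact curve is zero'', and follows from the balancing condition on $C$: slope jumps of $f|_C$ telescope to zero around every closed loop, with the contributions at each vertex canceling by the balancing of $C$ there. The main obstacle is the projection formula in the second step: making it rigorous requires a perturbation/genericity argument ensuring transverse intersection plus careful bookkeeping to identify the determinantal multiplicity on $X$ with the slope-jump multiplicity on $C$. This is precisely what Allermann-Rau carry out in the proof of their Lemma 8.3, which the proposition cites.
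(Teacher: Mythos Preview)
Your argument is correct but takes a genuinely different route from the paper. Both reduce, via multilinearity, to showing $D_{1}\cdots D_{n-1}\cdot D(f)=0$ for a global rational function $f$ on $X$. From there, however, the paper uses a direct deformation argument rather than an intermediate $1$-cycle: it sets $f_{t}:=\max\{t,f\}$ for $t\in\R$ and observes that the divisor $D(f_{t})$ varies continuously in $t$, so the integer-valued intersection number $D_{1}\cdots D_{n-1}\cdot D(f_{t})$ is continuous in $t$ and hence constant. For $t$ below $\min_{X} f$ one has $f_{t}=f$ and $D(f_{t})=D(f)$; for $t$ above $\max_{X} f$ one has $f_{t}\equiv t$ constant and $D(f_{t})=0$. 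Equating the two endpoints gives $D_{1}\cdots D_{n-1}\cdot D(f)=0$.

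Your approach has the conceptual advantage of tying the statement to the classical picture (principal divisors on compact curves have degree zero) and of generalising readily to other compact tropical spaces, but it leans on more of the Allermann--Rau machinery: one must actually construct the iterated intersection as a balanced $1$-cycle $C$, and justify the projection formula identifying the intersection multiplicity on $X$ with the divisor degree on $C$. The paper's homotopy argument sidesteps all of this and stays entirely within the intersection-number framework already set up in the section, at the small cost of being a trick specific to tropical (idempotent) geometry rather than a structural statement.
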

\begin{proof}
We take a rational function $f: \R^{n}/\Lambda \to \R$ such that $D(f)=D-D'$
and define a rational function $f_{t}$ on $\R^{n}/\Lambda$ for $t\in \R$ to be
$\max \{ t, f \}$.
Since the divisor $D(f_{t})$ deforms continuously, the intersection number
$D_{1}.\ldots D_{n-1}.D(f_{t})$ is continuous for $t$, in particular, it is constant for $t$.

Let $t_{-},t_{+}$ be a real number such that $f_{t_{-}}=t_{-}, f_{t_{+}}=f$.
Then
\begin{align*}
D_{1}.\ldots D_{n-1}.D(f)
&=D_{1}.\ldots D_{n-1}.D(f_{t_{+}})\\
&=D_{1}.\ldots D_{n-1}.D(f_{t_{-}})\\
&=D_{1}.\ldots D_{n-1}.0=0.
\end{align*}
\end{proof}

\subsection{Self-intersection number of divisors}

For a symmetric form $Q\in \Lambda^{*}\otimes (\Z^{n})^{*}$,
we define $\det Q$ as the determinant of the matrix representation of $Q$ with respect to
a basis of $(\Z^{n})^{*}$ and a basis of $\Lambda^{*}$
which define the same orientation of $(\R^{n})^{*}$.

\begin{thm}\label{morethm}
Let $X=\R^{n}/\Lambda$ be a tropical abelian variety and $D$ be a divisor on X.
Then $\frac{1}{n!}D^{n}=\det c_{1}(\mathcal{O}(D))$.
In particular, $\frac{1}{n!}D^{n}$ is always an integer.
\end{thm}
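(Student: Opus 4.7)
The plan is to reduce, via a polynomial-identity argument, to the case of a positive definite $Q = c_1(\mathcal{O}(D))$, and then compute $D^n$ explicitly from a standard theta function. First I would invoke the linear-equivalence invariance of intersection numbers (the preceding proposition) together with Proposition~\ref{LQA} to replace $D$ by the divisor of a theta function for $L(Q, \alpha)$ where $Q = c_1(\mathcal{O}(D))$. Both $\frac{1}{n!}D^n$ and $\det Q$ then depend only on the class $Q$. Next, multilinearity of the intersection number on $\operatorname{Div}(X)$, combined with the additivity $c_1(\mathcal{O}(D_1 + D_2)) = c_1(\mathcal{O}(D_1)) + c_1(\mathcal{O}(D_2))$, shows that the assignment $Q \mapsto D_Q^n$ (for any choice of divisor $D_Q$ with Chern class $Q$) is well-defined and, after $\Q$-linear extension, is a homogeneous degree-$n$ polynomial function on the ambient real vector space of symmetric bilinear forms on $\R^n$. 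The right-hand side $Q \mapsto n!\det Q$ is manifestly another homogeneous degree-$n$ polynomial. Since positive definite forms in $\operatorname{Im} c_1$ and their positive rational multiples are Zariski dense in this vector space, it suffices to establish the identity $D^n = n!\det Q$ when $Q$ is positive definite.

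For positive definite $Q$, I would work with the distinguished theta function
\[
\Theta_0(x) = \max_{\lambda \in \Lambda} \Bigl\{ q(\lambda)\cdot x - \tfrac{1}{2} Q_\R(\lambda, \lambda) \Bigr\}
\]
for $L(Q, 0)$. Its divisor $D = D(\Theta_0)$ lifts to the codimension-one skeleton of the Voronoi decomposition of $\R^n$ with respect to $Q$, each facet carrying weight $1$. The computation of $D^n$ is then performed by taking $n$ generic translates $D + v_1, \dots, D + v_n$ and summing the transverse intersection multiplicities produced by the definitions in Section~4.1.

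\textbf{The main obstacle} is the combinatorial identification of this sum with $n!\det Q$. The key geometric input is the gradient map $\nabla \Theta_0 \colon X \setminus D \to (\Z^n)^*/q(\Lambda)$, which is locally constant, sends the image of the Voronoi cell of $\lambda \in \Lambda$ to $[q(\lambda)]$, and induces a bijection with $(\Z^n)^*/q(\Lambda)$, a group of order $|\operatorname{Cok} q| = \det Q$. At each transverse intersection point of the translates, $n$ Voronoi facets meet, and the local multiplicity is the determinant of their primitive normals; summing over all intersection points and recognizing the total as the tropical Monge--Amp\`ere mass of $\Theta_0$ on $X$, i.e.\ the $q$-image volume of a fundamental domain of $\Lambda$ in $(\R^n)^*$ measured in lattice units, yields precisely $n!\det Q$. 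The factor of $n!$ arises because the sum counts ordered configurations of Voronoi walls. Integrality of $\frac{1}{n!}D^n$ then follows immediately from $\det Q \in \Z$, itself a consequence of $Q \in \Lambda^* \otimes (\Z^n)^*$.
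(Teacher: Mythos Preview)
Your reduction via a polynomial-identity argument on the space of symmetric forms is exactly the paper's final step: both sides are degree-$n$ polynomials on $\mathcal{L}_\R$, agreement on the positive definite cone forces agreement everywhere. So the outer architecture of your proof and the paper's coincide.

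The genuine divergence is in how the positive definite case is handled. The paper does not attempt a direct Voronoi/Monge--Amp\`ere computation. Instead it proceeds in three concrete steps: (i) for $Q = E_n$ and $\Lambda = \Z^n$ it computes $D^n = n!$ by hand, the $n!$ arising as the number of permutations indexing the transverse intersection points of $n$ small translates of the coordinate-hyperplane divisor; (ii) for principal positive definite $Q$ it builds a one-parameter family $\mathscr{X} \to (-\varepsilon, 1+\varepsilon)$ of tropical tori interpolating between $(\R^n/\Z^n, E_n)$ and $(\R^n/\Lambda, Q)$, with a global divisor whose fibrewise self-intersection is constant, so again $D^n = n!$; (iii) for non-principal $Q$ it passes to the overlattice $\widetilde{\Lambda} = q_\R^{-1}((\Z^n)^*)$ and uses that the divisor of $\Xi$ is $\widetilde{\Lambda}$-periodic, so the self-intersection on $\R^n/\Lambda$ is $|\widetilde{\Lambda}/\Lambda| = \det Q$ times the self-intersection on $\R^n/\widetilde{\Lambda}$, which is $n!$ by step (ii).

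Your approach replaces all three steps with a single Monge--Amp\`ere assertion, and this is where there is a gap. The sentence ``summing over all intersection points and recognizing the total as the tropical Monge--Amp\`ere mass of $\Theta_0$ on $X$'' is doing all the work, but it is not proved: you have not established, within the intersection-theory framework of \S4.1, that the self-intersection $D(\Theta_0)^n$ equals $n!$ times the $(\Z^n)^*$-lattice volume of $q(\text{fundamental domain})$. For a general Voronoi decomposition the combinatorics of how translated walls meet is not simply ``ordered configurations''; that description is specific to the cubical case $Q = E_n$. If you want to salvage the direct route, you would need either a cited tropical Monge--Amp\`ere identity on tori or an honest computation showing that the weighted count of transverse intersections equals $n!\det Q$ for arbitrary positive definite $Q$ --- and at that point the paper's deformation-plus-covering argument is a cleaner way to reach the same conclusion.
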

\begin{proof}
Firstly, we calculate the self-intersection number of a divisor $D$
for the line bundle $L(E_{n}, 0)$ on the tropical torus $\R^{n}/\Z^{n}$.
We can assume that
the divisor $D$ is a parallel translate of 
$\bigcup_{i=1}^{n}\{ (x_{1},\ldots ,x_{n})\in \R^{n} | x_{i}\in \Z \}$ with weight $1$.
To calculate the self-intersection number $D^{n}$,
we consider the intersection number 
$(D+\varepsilon e_{1}).(D+\varepsilon e_{2}).\ldots .(D+\varepsilon e_{n})$,
where $e_{1},\ldots ,e_{n}\in \Z^{n}$ is the standard basis and 
$\varepsilon$ is a sufficiently small positive number.
Then we can see that an intersection points of $(D+\varepsilon e_{1}), \ldots ,(D+\varepsilon e_{n-1})$, and
$(D+\varepsilon e_{n})$ corresponds to a permutation of numbers $\{ 1,\ldots ,n \}$.
Since the intersection multiplicity of each intersection point is $1$, we get $D^{n}=n!$.
Figure \ref{figthm2} is a $2$-dimensional example of these calculations

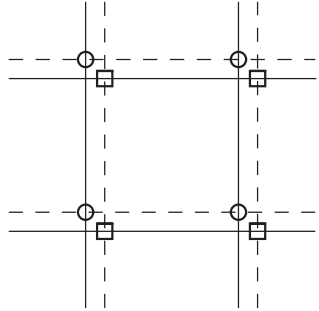
\begin{figure}[h]
\center
\unitlength 0.1in
\begin{picture}( 16.0000, 16.0000)(  0.0000,-18.0000)
%
{\color[named]{Black}{%
\special{pn 8}%
\special{pa 400 200}%
\special{pa 400 1800}%
\special{fp}%
\special{pa 0 1400}%
\special{pa 1600 1400}%
\special{fp}%
\special{pa 1600 600}%
\special{pa 1600 600}%
\special{fp}%
\special{pa 0 600}%
\special{pa 1600 600}%
\special{fp}%
\special{pa 1200 200}%
\special{pa 1200 1800}%
\special{fp}%
}}%
%
{\color[named]{Black}{%
\special{pn 8}%
\special{pa 0 500}%
\special{pa 1600 500}%
\special{da 0.070}%
\special{pa 1600 1300}%
\special{pa 0 1300}%
\special{da 0.070}%
\special{pa 500 200}%
\special{pa 500 1800}%
\special{da 0.070}%
\special{pa 1300 1800}%
\special{pa 1300 200}%
\special{da 0.070}%
}}%
%
{\color[named]{Black}{%
\special{pn 13}%
\special{ar 400 500 40 40  0.0000000  6.2831853}%
}}%
%
{\color[named]{Black}{%
\special{pn 13}%
\special{ar 1200 500 40 40  0.0000000  6.2831853}%
}}%
%
{\color[named]{Black}{%
\special{pn 13}%
\special{ar 1200 1300 40 40  0.0000000  6.2831853}%
}}%
%
{\color[named]{Black}{%
\special{pn 13}%
\special{ar 400 1300 40 40  0.0000000  6.2831853}%
}}%
%
{\color[named]{Black}{%
\special{pn 13}%
\special{pa 460 560}%
\special{pa 540 560}%
\special{pa 540 640}%
\special{pa 460 640}%
\special{pa 460 560}%
\special{pa 540 560}%
\special{fp}%
}}%
%
{\color[named]{Black}{%
\special{pn 13}%
\special{pa 1260 560}%
\special{pa 1340 560}%
\special{pa 1340 640}%
\special{pa 1260 640}%
\special{pa 1260 560}%
\special{pa 1340 560}%
\special{fp}%
}}%
%
{\color[named]{Black}{%
\special{pn 13}%
\special{pa 1260 1360}%
\special{pa 1340 1360}%
\special{pa 1340 1440}%
\special{pa 1260 1440}%
\special{pa 1260 1360}%
\special{pa 1340 1360}%
\special{fp}%
}}%
%
{\color[named]{Black}{%
\special{pn 13}%
\special{pa 460 1360}%
\special{pa 540 1360}%
\special{pa 540 1440}%
\special{pa 460 1440}%
\special{pa 460 1360}%
\special{pa 540 1360}%
\special{fp}%
}}%
\end{picture}%
\caption{Calculating the self-intersection number of
the divisor associated to $L(E_{2},0)$}
\label{figthm2}
\end{figure}

Second, we consider a divisor $D$ for a line bundle
$L(Q, 0)$ where $Q$ is positive definite and principal.
We consider a family of symmetric matrices  $Q_{s}:= ((1-s)E_{n}+sQ^{-1})^{-1} $
and a family of lattices $\Lambda_{s}=q_{s}^{-1}(\Z^{n})^{*}$
for $s\in (-\varepsilon , 1+\varepsilon )$.
Here $q_{s}^{-1}: (\Z^{n})^{*}\to \Lambda_{s}\subset \R^{n}$
is the linear map induced by $Q_{s}^{-1}$ and
$\varepsilon $ is a sufficiently small number such that $Q_{s}$ can be defined.

We define a $(\Z^{n}) ^{*}$-action on $\R^{n}\times (-\varepsilon , 1+\varepsilon )$ by
\[ a\cdot (x,s):= ( Q_{s}^{-1}\cdot a+x,s) , \ a\in (\Z^{n})^{*}, (x,s)\in \R^{n}\times (-\varepsilon , 1+\varepsilon ) . \]
We define $\mathscr{X}$ to be the quotient of $(\R^{n}\times (-\varepsilon , 1+\varepsilon ))$
by the $(\Z^{n})^{*}$-action and $X_{s}:=(\R^{n}\times \{ s\})/(\Z^{n})^{*}=\R^{n}/\Lambda_{s} $.
$\mathscr{X}$ is a tropical manifold and
$X_{s}$ is a divisor of $\mathscr{X}$.

Let $\Theta (x,s)$ be a regular function of $\R^{n}\times (-\varepsilon , 1+\varepsilon )$ defined to be
\begin{align*}
\Theta (x,s)&:=\max_{\lambda\in q_{s}^{-1}((\Z^{n})^{*})}\left\{ 
Q_{s}(\lambda ,x)-\frac{1}{2}Q_{s}(\lambda , \lambda )
\right\} \\
&=\max_{a\in (\Z^{n})^{*}}\left\{
a\cdot x-\frac{1}{2}Q_{s}( q_{s}^{-1}(a), q_{s}^{-1}(a))
\right\} \\
&=\max_{a\in (\Z^{n})^{*}}\left\{
a\cdot x-\frac{1}{2}Q_{s}^{-1}(a,a)
\right\} . \\
\end{align*}
Then $\Theta (x,s)$ satisfies the following quasi-periodicity;
\begin{align*}
\Theta (x+ q_{s}^{-1}(a),s)
&=\max_{a'\in (\Z^{n})^{*}}\left\{
a'\cdot (x+q_{s}^{-1}(a))-\frac{1}{2}Q_{s}^{-1}(a',a')
\right\} \\
&=\max_{a'\in (\Z^{n})^{*}}\left\{
a'\cdot x+Q_{s}^{-1}(a', a)-\frac{1}{2}Q_{s}^{-1}(a',a')
\right\} \\
&=\max_{a'\in (\Z^{n})^{*}}\left\{
a'\cdot x-\frac{1}{2}Q_{s}^{-1}(a'-a,a'-a)+\frac{1}{2}Q_{s}^{-1}(a, a)
\right\} \\
&=\max_{a'\in (\Z^{n})^{*}}\left\{
(a'+a)\cdot x-\frac{1}{2}Q_{s}^{-1}(a',a')+\frac{1}{2}Q_{s}^{-1}(a, a)
\right\} \\
&=\max_{a'\in (\Z^{n})^{*}}\left\{
a'\cdot x-\frac{1}{2}Q_{s}^{-1}(a',a')
\right\} +a\cdot x+\frac{1}{2}Q_{s}^{-1}(a, a)\\
&=\Theta (x,s)+a\cdot x+\frac{1}{2}Q_{t}^{-1}(a,a)
\end{align*}
Thus $\Theta (x,s)$ gives a divisor $[\Theta (x,s) ]$ on $\mathscr{X}$.

Since the intersection number  $[\Theta (x,s)]^{n}.X_{t}$ is constant for $t$, we get
\[ D^{n}=[\Theta (x,s)]^{n}.X_{1}=[\Theta (x,s)]^{n}.X_{0}=n!.\]

Next, we consider the case where $Q$ is positive definite but not necessary principal.
Let $\Xi$ be the section of $\mathcal{O}(D)$ satisfying $\varphi (\Xi ) = 0$ 
and let
$\widetilde{\Lambda}\subset \Lambda\otimes \R$
be the lattice $q_{\R}^{-1}((\Z ^{n})^{*}) $,
which is an overlattice of $\Lambda$.
Then 
$\Xi$ satisfies the quasi-periodicity
\[ \Xi (x+\eta )=
\Xi (x)+Q_{\R}(\eta ,x)+\frac{1}{2}Q_{\R} (\eta ,\eta )+Q_{\R}(\eta ,r) 
,\ \eta\in \widetilde{\Lambda} ,\ x\in \R^{n} \]
and thus
the divisor $\left[ \Xi\right]$ is $\widetilde{\Lambda }$-periodic,
while $\left[\Theta_{b}\right]$ is $\Lambda $-periodic.
Since the self-intersection number of $\left[\Xi\right]$ 
on $\R^{n}/\widetilde{\Lambda}$ is $n!$,
 the self-intersection number of $\left[\Theta_{b}\right]$ on $\R^{n}/\Lambda$ is
$n!\cdot | \widetilde{\Lambda}/\Lambda |=n!\det Q$.

Finally, we show the statement for a divisor $D$ which is not necessary positive definite.
Let $\mathcal{L}$ be the abelian group $\operatorname{Im}c_{1}\times (\R^{n})^{*}$
and $\mathcal{L}_{\R}$ be the linear space $(\operatorname{Im}c_{1}\otimes \R) \times (\R^{n})^{*}$.
An element $(Q,\alpha )$ of $\mathcal{L}$ gives rise to a line bundle $L(Q,\alpha)$.

Since $X$ is a tropical abelian variety, a line bundle $L(Q,\alpha)$ on $X$ has a rational section.
We define an $n$-linear form $\eta$ on $\mathcal{L}^{n}$
to be $\eta ((Q_{1},\alpha_{1}),\ldots ,(Q_{n},\alpha_{n}))=\frac{1}{n!}(D_{1}.\ldots .D_{n})$.
Here $D_{i}$ is a divisor on $X$ given by a rational section of $L(Q_{i},\alpha_{i})$.
This $\eta$ is well-defined because
the intersection number depends only on the linear equivalence class of divisors.
Let $\eta_{\R}\colon \mathcal{L}_{\R}^{n}\to \R$ be the natural extension of $\eta$.
Then $\eta_{\R} ((Q,\alpha), \ldots ,(Q,\alpha))=\det Q$ when $Q$ is positive definite.

That is, $\eta_{\R}((Q,\alpha),\ldots ,(Q,\alpha) ) $ and $\det Q$ coincide on
an intersection of $\mathcal{L}$ and an open cone in $\mathcal{L}_{\R}$.
Since both of them are polynomials of degree $n$,
they must coincide on the whole space $\mathcal{L}_{\R}$.
Therefore $\frac{1}{n!}D^{n}=\det c_{1}(\mathcal{O}(D))\in \Z$ for all divisor $D$ on $X$.

\end{proof}

\begin{cor}\label{cor}
Let $X$ be a tropical abelian surface and $D$ be a divisor on $X$.
Then the Riemann-Roch inequality
\[ h^{0}(X,D)+h^{0}(X,-D)\geq\frac{1}{2}D^{2}. \]
holds.
More precisely,
when we write $Q=c_{1}(\mathcal{O}(D))$ and $\mathcal{O}(D)=L(Q,\alpha)$,
\begin{itemize}
\item[(1)] If $Q$ is positive or negative definite, 
\[ h^{0}(X,D)+h^{0}(X,-D)=\frac{1}{2}D^{2}=\det Q. \]
\item[(2)] If $Q$ is positive or negative semidefinite and $Q$ has $0$ as an eigenvalue, 
\begin{itemize}
\item If $Q\ne 0$ and $\alpha$ lies in $\operatorname{Im}q_{\R}+(\Z^{2})^{*}$,
\[ \left| \text{the torsion part of }\operatorname{Cok}(q)\right|=h^{0}(X,D)+h^{0}(X,-D)
>\frac{1}{2}D^{2}=0, \]
where $q\colon \Lambda \to (\Z^{n})^{*}$ is the linear map given by $Q=c_{1}(\mathcal{O}(D))$
and $q_{\R}\colon \R^{n}\to (\R^{n})^{*}$ is the extension of $q$.
\item If $Q=0$ and $\alpha$ lies in $(\Z^{2})^{*}$,
\[  2=h^{0}(X,D)+h^{0}(X,-D) >\frac{1}{2}D^{2}=0, \]
\item If $\alpha$ does not lie in $\operatorname{Im}q_{\R}+(\Z^{2})^{*}$,
\[ h^{0}(X,D)+h^{0}(X,-D)=\frac{1}{2}D^{2}=0. \]
\end{itemize}
\item[(3)] If $Q$ is indefinite, 
\[ 0=h^{0}(X,D)+h^{0}(X,-D)> \frac{1}{2}D^{2}=\det Q. \]
\end{itemize}
\end{cor}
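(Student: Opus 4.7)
The plan is to reduce Corollary \ref{cor} to a case analysis over the signature of $Q := c_1(\mathcal{O}(D))$, combining three prior results. By Proposition \ref{LQA} I may write $\mathcal{O}(D) \cong L(Q,\alpha)$ for some $\alpha \in (\R^2)^*$; Theorem \ref{rD} identifies $h^0(X,D)$ with the polyhedral dimension of $\Gamma(X,\mathcal{O}(D))$; and Theorem \ref{morethm} at $n=2$ gives $\tfrac{1}{2}D^2 = \det Q$. A useful first observation is that $\det(-Q) = \det Q$ on $2\times 2$ symmetric matrices, so the right-hand side of the desired inequality is insensitive to the sign of $D$, whereas the eigenvalues of $-Q$ flip sign, which is what drives the case split.

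Combining Theorem \ref{THM} with the Remark that $L(Q,\alpha)$ has no regular section whenever $Q$ has a strictly negative eigenvalue, I summarize $h^0(X,D)$ as follows: it vanishes whenever $Q$ has a negative eigenvalue; when $Q$ is positive semidefinite it equals $|\text{torsion part of }\Cok q|$ if $\alpha \in \operatorname{Im}(q_\R) + (\Z^2)^*$ and $0$ otherwise; and when $Q$ is positive definite the torsion part is all of $\Cok q$, of cardinality $\det Q$. The analogous description for $h^0(X,-D)$ is obtained by applying this to $(-Q,-\alpha)$, noting that $\operatorname{Im}((-q)_\R) = \operatorname{Im}(q_\R)$ and $(\Z^2)^* = -(\Z^2)^*$, so the coset condition on $\alpha$ is invariant under $\alpha \mapsto -\alpha$.

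The three sign regimes then fall into place essentially mechanically. If $Q$ is positive definite then $-Q$ has two negative eigenvalues, so $h^0(X,-D) = 0$ and $h^0(X,D) = \det Q$, giving equality in Case (1); the negative-definite subcase is symmetric. If $Q$ is indefinite then both $\pm Q$ have a negative eigenvalue so both $h^0$ terms vanish, while $\det Q < 0$, yielding the strict inequality of Case (3). If $Q$ is nonzero and positive semidefinite with a zero eigenvalue, then $\det Q = 0$ while $-Q$ carries the negative eigenvalue and kills $h^0(X,-D)$, so the sum equals $|\text{torsion part of }\Cok q| \geq 1$ or $0$ depending on whether $\alpha$ lies in the correct coset; the negative-semidefinite subcase is symmetric. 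The degenerate subcase $Q = 0$ is handled directly: $L(0,\alpha)$ is the trivial bundle exactly when $\alpha \in (\Z^2)^*$, in which case $\Gamma(X,\mathcal{O}(D)) = \T$ has polyhedral dimension $1$, so each of $h^0(X,\pm D)$ contributes $1$ and the sum is $2$.

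The step requiring the most care, in my view, is Case (2): I must verify that the coset condition on $\alpha$ is invariant under $\alpha \mapsto -\alpha$, that the torsion part of $\Cok q$ has cardinality at least $1$ when $Q$ is rank-one positive semidefinite (which follows because the induced map $\Lambda/\ker q \to \operatorname{Im}(q_\R) \cap (\Z^2)^*$ has full rank and hence finite nonempty cokernel), and that the trivial-bundle subcase $Q = 0$ is extracted correctly from Theorem \ref{THM} rather than folded into it. Everything else reduces to direct substitutions into Theorems \ref{THM}, \ref{rD}, and \ref{morethm}.
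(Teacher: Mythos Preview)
Your proposal is correct and follows essentially the same approach as the paper: the paper's proof is a two-sentence pointer to Theorem \ref{morethm} for the right-hand side and to the combination of Theorems \ref{rD} and \ref{THM} for the left-hand side, and you have simply unpacked the implicit case analysis over the signature of $Q$ that those references entail. Your additional care in Case (2)---checking the invariance of the coset condition under $\alpha\mapsto -\alpha$, and separating out the $Q=0$ subcase where both $h^{0}(X,\pm D)$ contribute---is exactly what the terse original leaves to the reader.
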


\begin{proof}
The values of right-hand side was calculated in the above theorem.
On the other hand,
we get the value of left-hand side by combining Theorem \ref{rD} and Theorem \ref{THM}.
\end{proof}

\begin{rmk}
The above result gives us a desirable value of $h^{1}(X,D)$, 
although we don't have any reasonable definition of it.
\end{rmk}

\begin{rmk}
These consequences are compatible with the Riemann-Roch inequality for classical abelian varieties.
See \cite[Corollary 3.5.4, Theorem 3.6.1 and 3.6.3]{CAV}.
\end{rmk}

\section*{Acknowledgements}
I am deeply grateful to my advisor Hiroshi Iritani for his advice.
This paper would not have been possible without his guidance.

Special thanks go to Yuji Odaka for his helpful advice. 
In particular, he suggested to study the Riemann-Roch inequality
for surfaces with trivial canonical class.

I would like to thank Dustin Cartwright for his valuable comments.
He informed me of his definition of $h^{0}(X,D)$ and
suggested to study the relationship between
$h^{0}(X,D)$ and $\dim H^{0}(X,\mathcal{O}(D))$,
and also to study the integrality of $\frac{1}{n!}D^{n}$.
Theorem \ref{rD} and Theorem \ref{morethm} were then obtained.



\end{document}